\def\ods#1{\noindent\hbox to 1truecm{\hfil#1\ }\hangindent1truecm\hangafter1}
\def\re{\mathbb R}
\def\rn{\mathbb N}
\def\lm{Lebesgue measure}
\def\rmr{\re\times(\re\setminus\{0\})}
\def\card{\operatorname{card\,}}
\newtheorem{proposition}{Proposition}
\newtheorem{theorem}[proposition]{Theorem}
\newtheorem{lemma}[proposition]{Lemma}
\theoremstyle{remark}
\newtheorem{example}[proposition]{Example}
\newtheorem*{remark}{Remark}
\begin{document}

\author{Miroslav Chleb\'\i k}

\address{University of Sussex, UK} 
\email{m.chlebik@sussex.ac.uk}

\title{On the Erd\"os similarity problem}

\begin{abstract} New partial results are obtained related to the following old problem of Erd\"os:  for any
infinite set $X\subseteq \re$ to show that there is always a measurable (or,
equivalently, closed) subset of $\re$ of positive Lebesgue measure which contains no
subset geometrically similar to~$X$.\end{abstract}

\keywords{Lebesgue measure, geometrically similar copies, infinite patterns}

\subjclass{Primary: 28A05, 28A12; Secondary: 28A99}	

\maketitle

\section{Introduction}

A subset $X$ of real numbers is called {\it universal (modulo similarities) w.r.t.\ the sets of positive measure} if every Lebesgue measurable set of positive measure necessarily contains a geometrically similar copy of ~$X$; in $\re$ that means a copy of $X$ by a nonconstant affine map. It is an easy observation based on Lebesgue density theorem that all finite sets are universal. A long standing conjecture of Erd\"os is that there is no infinite universal set. In another words, that every infinite set $X \subseteq \re$ is non-universal in a sense that for such a set there exists a Lebesgue measurable set $C \subseteq \re$  of positive measure not containing any similar copy of~$X$.

Paul Erd\"os first posed this problem in 1974 at The Fifth Balkan Mathematical Congress, and then he repeated it on many occasions. The problem is recorded in \cite{E} in 1981, where he offers a prize of $\$100$ for the solution. While this naturally arising question was asked more than 40 years ago, there are very few results concerning it and the general result remains unproved. It is not even known if it is true for every uncountable set~$X$. Many useful references can be found in \cite{S} and \cite{CFG}.

Given any set~$X$, if some subset of~$X$ is non-universal, then so is~$X$. Moreover, the classes of universal/non-universal sets are invariant under the similarity mappings. Therefore much attention has been focused on the case where~$X$ is a {\it zero-sequence}; namely, an infinite strictly monotone sequence of positive numbers converging to $0$. If every zero-sequence is proven to be non-universal, then the problem is solved. On the other hand it is not even known whether all uncountable sets are non-universal, and so far we have not been able to exclude that Cantor-like universal sets exist.

Various authors have proved the conjecture for zero-sequences that converge sufficiently
slowly, for example Falconer~\cite{F}, with $\lim_{k\to
\infty}\frac{x_{k+1}}{x_k}=1$. Bourgain \cite{B} tackled the problem from a different perspective,
disproving first an infinite version of the $3$-dimensional Szemer\'edi phenomenon. As a corollary he shows non-universality of any triple sum
$X=X_1+X_2+X_3$, where $X_1$, $X_2$, and $X_3$ are infinite sets of reals.


The above notion of universality w.r.t.\ the collection of all sets of positive Lebesgue measure is trivial for unbounded sets; any unbounded subset of $\re$ is non-universal then (witnessed by any bounded measurable set of positive measure). It is interesting to mention that for any bounded set ~$X$ the above notion of universality w.r.t.\ the sets of positive Lebesgue measure is equivalent to the one w.r.t.\ the sets whose complement has finite Lebesgue measure. Consider a bounded non-universal set $X \subseteq \re$, and let~$C$ be a witness of its non-universality; namely,~$C$ is a Lebesgue measurable set of positive measure containing no similar copy of~$X$. Using a sequence ~$C_n$ of similar copies of $C$, properly scaled about a point of density of~$C$ so that the Lebesgue measure of its relative complement in $(-n, n)$, namely of $G_n:= (-n, n) \setminus C_n$,  is very small, say $\lambda (G_n)<\epsilon2^{-n}$, one can construct another such witness whose complement $G=\cup_{n=1}^\infty G_n$ is of arbitrarily small Lebesgue measure.

That is why we address the question of  {\it universality (modulo similarities) w.r.t.\ the sets with complement of finite Lebesgue measure}; for bounded sets $X \subseteq \re$ we are back to the original Erd\"os problem, but this new setting does the problem to characterize universal sets interesting for unbounded sets as well. This kind of questions could be viewed as an attempt to address Szemer\'edi-type problems (about finite patterns in sets of integers of positive upper density) also in the continuous setting, and for infinite structures.

We need to develop the methods of proofs that certain infinite patterns $X \subseteq \re$
are non-universal. For such an $X$ we need to find (or to prove the existence of) a witness of non-universality of~$X$; namely, a Lebesgue measurable set $C \subseteq \re$ with complement of finite Lebesgue measure that contains no similar copy of~$X$. As any such measurable set can be approximated in measure from inside by its closed subsets, we will see that then also a closed witness exists. But for a closed set~$C$, the question of whether it contains any similar image of~$X$ or not is equivalent to the same being true for similar images of the closure of~$X$. That is why a set $X \subseteq \re$ is universal if and only if its closure is universal. Moreover, by the scale invariance of the problem, the complement of the witness can be an open set of arbitrarily small positive Lebesgue measure.

{\bf Non-universal sets.} A set $X \subseteq \re$ is non-universal w.r.t.\ the sets with complement of finite Lebesgue measure if and only if for every $\varepsilon>0$ there is a measurable (or, equivalently, open) set $G$ with $\lambda(G)<\varepsilon$, that intersects all similar copies $(a+bX)$ of~$X$.

{\bf Reformulation as a plane covering problem.} One can represent a similarity mapping $X\mapsto a+bX$ by a point $(a, b)$ in $\re\times(\re\setminus\{0\})$.  For any subset of similarities $A \subseteq \re\times(\re\setminus\{0\})$ and for any set $G \subseteq \re$,
$(a+bX)\cap G\ne\emptyset$ whenever $(a, b)\in A$ means exactly that
$A$ is covered by $L_X(G)$, where  $L_X(G)$ is defined as follows:
$$L_X(G)=\cup\{L_X(z): z\in G\}, \text{where 
$L_X(z)=\{(z-bx, b): b\in\re,\ x\in X\}$ for any $z\in\re.$}$$
Informally, to create  $L_X(G)$ we take for each $z \in G$ (thought as the point $(z, 0)$ in the plane) the lines 
$\{(z-bx, b): b\in \re \}$ through that point, one for each $x \in X$; the set $X$ determines slopes of these lines in apparent way.


The proof of non-universality of such a set $X$ can be equivalently reformulated as  the following plane covering problem:
prove that for any $\varepsilon>0$  there is a measurable (or, equivalently, open) set $G$ with $\lambda(G)<\varepsilon$ such that $L_X(G)$ covers all of $\re\times(\re\setminus\{0\})$.

{\bf Localization.} Inherent translation and scale invariance of the problem implies that for to solve the above mentioned covering problem it is sufficient to find  locally covering by $L_X(G)$ with arbitrarily small  $\lambda(G)$, or, equivalently, for a single set $A \subseteq \re\times(\re\setminus\{0\})$ with nonempty interior; 
our canonical choice will be $A=[0,1] \times [1,2].$
A set $X$ is non-universal iff
for any $\varepsilon>0$  there is a measurable (or, equivalently, open) set $G$ with $\lambda(G)<\varepsilon$ such that $L_X(G)$ covers 
$[0,1] \times [1,2]$.

{\bf Finite patterns reduction.} As $G$ above can be taken open, if $L_X(G)$ covers the square $[0,1] \times [1,2]$ then, by a simple compactness argument, this square is covered also by $L_Y(H)$ for some finite set $Y \subseteq X$ and for a set $H$ consisting of finitely many components of $G$.
Hence one of our main tasks here is to find, for any given $\varepsilon>0$,
the conditions on a finite set $Y \subset \re$ that would be sufficient for existence
of a set $G$ (consisting of finitely many intervals, say) with $\lambda(G)<\varepsilon$ such that $[0,1] \times [1, 2] \subseteq L_Y(G)$.

For such a finite set $Y$ and for any sufficiently short open interval $(a, b)$,
the set $L_Y((a, b))$ when restricted to the strip $\re \times [1,2]$ is a "bush-shape" plane set; we will study to some detail covering properties of the families of such sets.

While an original Erd\"os question was only interesting for $X$ bounded, the question of whether a given infinite set $X$ has the property that there are measurable sets
$C\subset \re$ with the complement of arbitrarily small (positive) measure containing
no similar copy of~$X$ seems to be for unbounded set equally interesting as for
bounded one.  For example, any sequence $\{k^{\alpha }\}^\infty_{k=1}$ for $\alpha
\in(0, 1)$ possess this property but, on the other hand, the set
$\rn=\{k\}^\infty_{k=1}$ of positive integers does not, as any set $C\subseteq \re$
with $\lambda (\re\setminus C)<\infty$ contains plenty of similar copies of~$\rn$.

\begin{figure}
\begin{tikzpicture}[yscale=0.4,xscale=0.6]
\node (A) at (12,0) {};
\coordinate (B) at (13,0);
\draw[->] (0,0) -- (23,0);
\draw (0,8) -- (22,8);
\draw (0,16) -- (22,16);
\draw[->] (0,0) -- (0,19);
\node at (23,-0.5) {$a$};
\node at (23,8) {$b=1$};
\node at (23,16) {$b=2$};
\node at (-0.8,19) {$b$};

\draw  (10,0) to (1.5,17);
\draw  (11,0) to (2.5,17);
\fill[gray] (10,0) -- (11,0) -- (2.5,17) -- (1.5,17);
\draw  (10,0) to (5,17);
\draw  (11,0) to (6,17);
\fill[gray] (10,0) -- (11,0) -- (6,17) -- (5,17);
\draw  (10,0) to (9,17);
\draw  (11,0) to (10,17);
\fill[gray] (10,0) -- (11,0) -- (10,17) -- (9,17);
\draw  (10,0) to (14,17);
\draw  (11,0) to (15,17);
\fill[gray] (10,0) -- (11,0) -- (15,17) -- (14,17);
\draw  (10,0) to (17,17);
\draw  (11,0) to (18,17);
\fill[gray] (10,0) -- (11,0) -- (18,17) -- (17,17);
\draw [line width=10pt] (10,0)--(11,0);
\end{tikzpicture}

\caption{A plane covering problem using "bush-shape" sets}\label{fig1}
\end{figure}
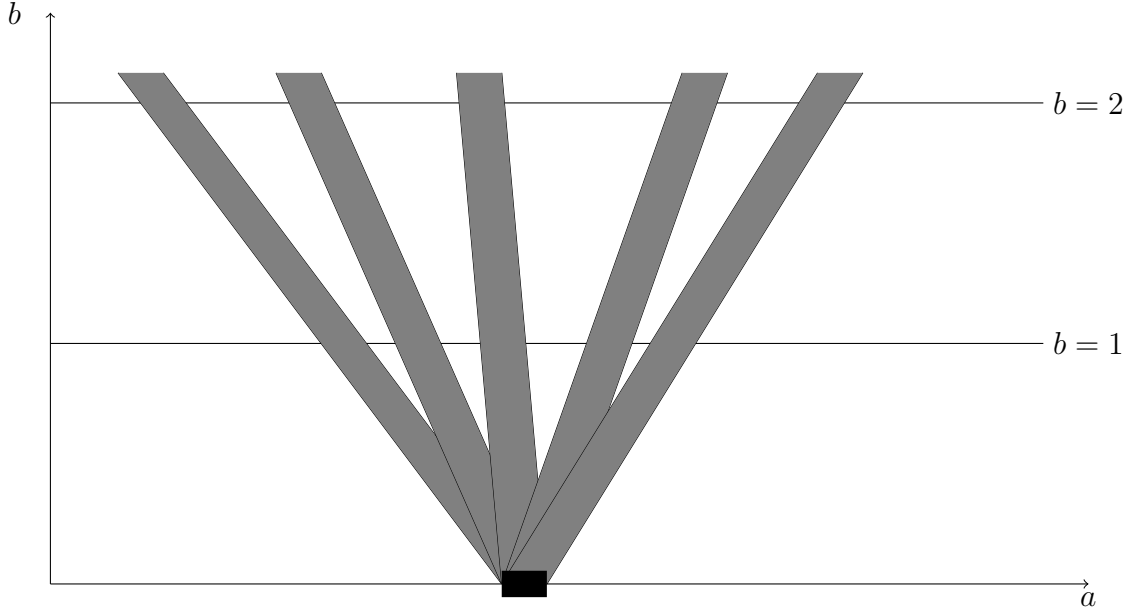

\medskip
{\bf Notation.} By $\lambda $ and $\lambda ^k$ we denote the outer \lm\ in $\re$ and $\re^k$,
respectively. For $E\subseteq \re^2$ and $b\in\re$ let $E^b$ stand for $\{a\in\re:(a,
b)\in E\}$.

Let $X\subseteq \mathbb R$ be an arbitrary set. Put 
\begin{align*}
L_X(z)&=\{(z-bx, b): b\in\re,\ x\in X\} \text{ for any $z\in\re$, and }\\
L_X(G)&=\cup\{L_X(z): z\in G\} \text{  whenever  $G\subseteq \re$.}
\end{align*}

Let us denote by $\mu_X$ a set function on $\re \times (\re\setminus\{0\})$ defined by
the formula
$$\mu_X(A)=\inf\{\lambda(G):G\subseteq \re\text{\ and \ }L_X(G)\supseteq
A\}\text{\  for any \  } A\subseteq \re \times (\re\setminus\{0\})\,.$$
If $X$ is infinite, define a set function $\tilde \mu_X$ on $\re \times (\re\setminus\{0\})$
 by the formula
$$\tilde\mu_X(A)=\sup\{\mu_{X\setminus K}(A):K\subseteq X\text{\ is finite$\}$\
for any\ }A\subseteq \re \times (\re\setminus\{0\}).$$
For $\delta \in (\re\setminus\{0\})$
 let $\varphi_\delta$ stand for the linear mapping $(a, b)\mapsto (\delta a, b)$.\\ 
 For $c \in\re$ let $\psi_c$ stand for the linear mapping $(a, b)\mapsto (a +bc, b)$.

\begin{proposition}\label{first}
Let $X\subseteq \re$ be an arbitrary set. Then the following hold:
\begin{enumerate}
\item [(i)] $(a, b)\in L_X(z)$ iff $z\in(a+bX)$, whenever $(a, b)\in\re^2$ and $z\in \re$.
\item [(ii)] $(a, b)\in L_X(G)$ iff $(a+bX)\cap G\ne\emptyset$, whenever $(a, b)\in\re^2$
and $G\subseteq\re$.
\item [(iii)] $L_X(\alpha +\beta G)= (\alpha, 0) + \beta L_X(G)$ whenever $\alpha , \beta
\in\re$ and $\beta \ne 0$.
\item [(iv)] $L_{\delta X}(G)=\varphi_\delta [L_X(\delta ^{-1} G)]$ whenever $\delta
\in\re\setminus\{0\}$.
\item [(v)] $L_{X-c}(G)=\psi_c[L_X(G)]$ whenever $c
\in\re$.

\item [(vi)] $L_{\overline X} (G)=L_X(G)$ whenever $G$ is open.


\item [(vii)] $\mu_X$ is an outer measure on $\re \times (\re\setminus\{0\})$.
\item [(viii)] For each $A\subseteq \re \times (\re\setminus\{0\})$
there are open sets $G_k\subseteq \re$ ($k=1, 2, \dots$) such that
$L_X(G_k)\supseteq A$ and $\lambda (G_k)\le \mu_X(A)+2^{-k}$.
\item [(ix)] $\mu_X\leq \mu_Y$ whenever $Y\subseteq X$ and $\mu_{\overline X}=\mu_X$.
\item [(x)] $\mu_X[(\alpha, 0) +\beta A]=|\beta |\mu_X(A)$ whenever $\alpha , \beta
\in\re$, $\beta \ne 0$ and $A\subseteq \re \times (\re\setminus\{0\})$.
\item [(xi)] $\mu_{\delta ^{-1}X}(A)=|\delta |^{-1}\mu_X[\varphi_\delta (A)]$ whenever
$\delta \in\re\setminus\{0\}$ and $A\subseteq \re \times (\re\setminus\{0\})$.
\item [(xii)] For any $A\subseteq \re \times (\re\setminus\{0\})$ the following are
equivalent:

\ods{(a)} $\mu_X(A)=0$\par
\ods{(b)} For each $\varepsilon >0$ there exists an open set $G\subseteq \re$ with
$\lambda (G)<\varepsilon $ such that 

$(a+bX)\cap G\ne\emptyset$ whenever $(a, b)\in A$.
\item [(xiii)] If $X\subseteq\re$ is infinite then (vi)--(xi) hold with $\tilde\mu_X$
instead of $\mu_X$.
\item [(xiv)] If $X\subseteq\re$ is infinite then for any $A\subseteq \re \times (\re\setminus\{0\})$
the following are equivalent:

\ods{(a)} $\tilde\mu_X(A)=0$\par
\ods{(b)} For each $\varepsilon >0$ there exists an open set $G\subseteq \re$ with
$\lambda(G)<\varepsilon $ such that 

$(a +bX)\cap G$ is infinite whenever $(a, b)\in A$.
\end{enumerate}
\end{proposition}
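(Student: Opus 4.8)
The whole proposition turns on the dictionary recorded in (ii): $(a,b)\in L_X(G)$ if and only if $(a+bX)\cap G\neq\emptyset$. The plan is to prove (ii) first, straight from the definition of $L_X$, with (i) being its one-point special case, and then to read every subsequent assertion through it. Once (ii) is available, (iii)--(v) become one-line changes of variable: expanding $(a+bX)\cap(\alpha+\beta G)\neq\emptyset$, then $(a+b\delta X)\cap G\neq\emptyset$, then $(a+b(X-c))\cap G\neq\emptyset$, and solving the defining linear relation for the relevant coordinate, reproduces precisely the affine maps $(a,b)\mapsto(\alpha,0)+\beta(a,b)$, $\varphi_\delta$, and $\psi_c$. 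The only two structural facts I would isolate for repeated use afterwards are that $G\mapsto L_X(G)$ is monotone and that $L_X(\bigcup_n G_n)=\bigcup_n L_X(G_n)$, both immediate from the definition.

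The one genuinely topological step is (vi), and it is exactly here that openness of $G$ enters. The inclusion $L_X(G)\subseteq L_{\overline X}(G)$ is free from $X\subseteq\overline X$. For the reverse, suppose $(a,b)\in L_{\overline X}(G)$, so $a+b\bar x\in G$ for some $\bar x\in\overline X$; if $b=0$ both sets equal $G\times\{0\}$, while if $b\neq0$ I would choose $x_n\in X$ with $x_n\to\bar x$, so that $a+bx_n\to a+b\bar x\in G$ and, $G$ being open, $a+bx_n\in G$ for large $n$, giving $(a,b)\in L_X(G)$. With this crux lemma in hand the measure-theoretic items are routine. For (vii) I would verify $\mu_X(\emptyset)=0$, monotonicity, and countable subadditivity directly, the last using $L_X(\bigcup_n G_n)=\bigcup_n L_X(G_n)$ with near-optimal covers $\lambda(G_n)\le\mu_X(A_n)+\varepsilon 2^{-n}$. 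Statement (viii) is outer regularity of Lebesgue outer measure: enlarge a near-optimal $G$ to an open $U\supseteq G$ with $\lambda(U)$ barely larger and use monotonicity of $L_X$. Then (ix) splits into the trivial $\mu_X\le\mu_Y$ for $Y\subseteq X$ (fewer slopes, so $L_Y(G)\subseteq L_X(G)$) and the closure invariance $\mu_{\overline X}=\mu_X$, which follows by combining (viii), so the infimum is attained along open $G$, with (vi), so that on open $G$ the sets $L_X(G)$ and $L_{\overline X}(G)$ coincide. Finally (x) and (xi) are (iii) and (iv) read off under the substitutions $H=\alpha+\beta G$ and $H=\delta G$ with $\lambda(\alpha+\beta G)=|\beta|\lambda(G)$, and (xii) is merely the definition of $\mu_X(A)=0$ unwound through (ii), with (viii) supplying openness.

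For (xiii) the key observation is that $\tilde\mu_X=\sup_K\mu_{X\setminus K}$ is a supremum of outer measures over the directed family of finite $K\subseteq X$; such a supremum is again an outer measure, subadditivity passing through because for each fixed $K$ one has $\sum_n\mu_{X\setminus K}(A_n)\le\sum_n\tilde\mu_X(A_n)$, and monotonicity, the affine covariance of (x)--(xi) and the closure invariance transfer to the supremum, the last using (ix) for each $X\setminus K$ together with the fact that for finite $K$ the closures $\overline{X\setminus K}$ and $\overline X\setminus K$ differ by at most finitely many points, which $\mu$ does not see. The substance is (xiv), and here the device is the elementary equivalence, valid because $b\neq0$ makes $x\mapsto a+bx$ injective, that $(a+bX)\cap G$ is infinite if and only if $(a,b)\in L_{X\setminus K}(G)$ for every finite $K\subseteq X$. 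Granting this, the implication (b)$\Rightarrow$(a) is immediate: deleting the finitely many slopes of $K$ removes at most finitely many points from an infinite intersection, so $A\subseteq L_{X\setminus K}(G)$ for every $K$, whence $\tilde\mu_X(A)=0$.

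The forward implication (a)$\Rightarrow$(b) is the main obstacle. Fixing $\varepsilon>0$, I would take an increasing exhaustion $K_1\subseteq K_2\subseteq\cdots$ of $X$ by finite sets and use $\mu_{X\setminus K_n}(A)=0$ with (viii) to choose open $G_n$ with $\lambda(G_n)<\varepsilon 2^{-n}$ and $A\subseteq L_{X\setminus K_n}(G_n)$; then $G=\bigcup_n G_n$ is open with $\lambda(G)<\varepsilon$, and for each $(a,b)\in A$ and each $n$ there is $x_n\in X\setminus K_n$ with $a+bx_n\in G$. The delicate point, and the heart of the whole argument, is to force these to be \emph{infinitely many distinct} points rather than one repeated value: if the $x_n$ took only finitely many values $F\subseteq X$, then $F\subseteq K_n$ for all large $n$, contradicting $x_n\in F\setminus K_n$. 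This distinctness step is exactly where the exhaustion is needed and is transparent for countable $X$; the genuinely delicate case is uncountable $X$, where no countable family of finite sets is cofinal and the countably many layers $G_n$ can only exert pressure on the countable set $\bigcup_n K_n$ of slopes, so this is the part I would treat most carefully.
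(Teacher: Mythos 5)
Your handling of (i)--(xii) is correct and essentially identical to the paper's proof: (ii) as the basic dictionary, (vi) as the single topological step via a sequence $x_n\to\bar x$ inside the open set $G$, and the measure-theoretic items (vii)--(xii) read off through near-optimal covers, outer regularity, and the affine identities. Your (b)$\Rightarrow$(a) in (xiv) is also fine. The genuine gap is (xiv), direction (a)$\Rightarrow$(b), and you have located it yourself without closing it. Your construction begins with ``an increasing exhaustion $K_1\subseteq K_2\subseteq\cdots$ of $X$ by finite sets,'' which exists only when $X$ is countable; for uncountable $X$ (which the statement covers, and which is the case of real interest for the Erd\"os problem) no countable family of finite sets is cofinal, and, as you concede, the distinctness argument then has nothing to push against. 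The paper closes this with a device you already have the tools for: choose a \emph{countable dense} subset $X_1=\{x_i\}_{i=1}^\infty$ of $X$ and let $X_k=\{x_i\}_{i\ge k}$ be its tails. Since $\overline{X_1}=\overline{X_k}\cup\{x_1,\dots,x_{k-1}\}$, density gives $\overline{X_k}\supseteq X\setminus\{x_1,\dots,x_{k-1}\}$, so by the antitonicity and closure invariance in (ix), $\mu_{X_k}(A)=\mu_{\overline{X_k}}(A)\le\mu_{X\setminus\{x_1,\dots,x_{k-1}\}}(A)=0$, the last equality being hypothesis (a). Now pick open $G_k$ with $\lambda(G_k)<2^{-k}\varepsilon$ and $(a+bX_k)\cap G_k\ne\emptyset$ for all $(a,b)\in A$, and set $G=\bigcup_k G_k$. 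For fixed $(a,b)\in A$, each $k$ supplies an index $i_k\ge k$ with $a+bx_{i_k}\in G$; the $i_k$ are unbounded and $b\ne0$ makes $x\mapsto a+bx$ injective, so $(a+bX)\cap G\supseteq(a+bX_1)\cap G$ is infinite. In short: one does not exhaust $X$, one replaces it by a countable dense subset and lets (ix) transfer the hypothesis to its tails.

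A smaller repair is needed in your (xiii). You justify $\tilde\mu_{\overline X}=\tilde\mu_X$ by saying that $\overline{X\setminus K}$ and $\overline X\setminus K$ ``differ by at most finitely many points, which $\mu$ does not see.'' The set-theoretic bookkeeping is right, but the principle that $\mu$ is unchanged by adjoining finitely many points (i.e.\ $\mu_{Y\cup F}=\mu_Y$ for finite $F$) is not available: it is essentially the question the paper itself records as open in the remark following Theorem~\ref{trinast} (whether adding a point to a universal set preserves universality). Fortunately you never need that equality, only one-sided inequalities, and those come from monotonicity alone: from $\overline X\setminus K\subseteq\overline{X\setminus K}$ and (ix) you get $\mu_{X\setminus K}(A)=\mu_{\overline{X\setminus K}}(A)\le\mu_{\overline X\setminus K}(A)\le\tilde\mu_{\overline X}(A)$ for every finite $K\subseteq X$, while conversely, for finite $K'\subseteq\overline X$, the inclusion $X\setminus(K'\cap X)=X\setminus K'\subseteq\overline X\setminus K'$ gives $\mu_{\overline X\setminus K'}(A)\le\mu_{X\setminus(K'\cap X)}(A)\le\tilde\mu_X(A)$. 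With that rewording, and with the countable-dense-subset argument supplied for (xiv), your proof matches the paper's.
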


\begin{proof} (i) and (ii) are obvious.

\smallskip
(iii) $L_X(\alpha +\beta G)=\{(\alpha +\beta y-bx, b): b\in\re, x\in X, y\in G\}=
(\alpha, 0)+\beta \{(y-\beta ^{-1}bx, \beta ^{-1}b):b\in\re, x\in X, y\in G\}=
(\alpha, 0) +\beta  L_X(G)$.

\smallskip
(iv) $L_{\delta X} (G)=\{(z-b\delta x, b):b\in\re, x\in X, z\in G\}=\varphi_\delta \{(
\delta ^{-1}z-bx): b\in\re, x\in X, z\in G\}=\varphi_\delta [L_X(\delta ^{-1}G)]$.

\smallskip
(v) $L_{X-c} (G)=\{(z-b(x-c), b): b\in\re, x\in X, z\in G\}=\psi_c\{(z-bx, b): 
b\in\re, x\in X, z\in G\}=\psi_c [L_X(G)]$.

\smallskip
(vi) If $G$ is open, $(a +bX)\cap G\ne \emptyset$ iff $(a+b\overline X)\cap
G\ne\emptyset$ and we have $L_X(G)=L_{\overline
X}(G)$ due to (ii) .

\smallskip
(vii) Let $A, A_1, A_2, \dots \subseteq \re \times (\re\setminus\{0\})$ be such that 
$A\subseteq \cup^\infty_{i=1}A_i$.\newline
To prove $\mu_X(A)\le \sum^\infty_{i=1}\mu_X(A_i)$ we keep any $\varepsilon >0$ fixed 
and choose
$G_i\subseteq \re$ ($i=1, 2, \dots$) such that $L_X(G_i)\supseteq A_i$ and $\lambda
(G_i)\leq \mu_X(A_i)+2^{-i}\varepsilon $. Let $G=\sum^\infty_{i=1}G_i$. Then
$L_X(G)\supseteq \sum^\infty_{i=1} L_X(G_i)\supseteq \cup^\infty_{i=1} A_i\supseteq
A$ and $\lambda (G)\leq \sum^\infty_{i=1}\lambda (G_i)\leq \varepsilon
+\sum^\infty_{i=1}\mu_X(A_i)$. From that we get
$$\mu_X(A)\leq \lambda (G)\leq \varepsilon
+\sum^\infty_{i=1}\mu_X(A_i)\text{\ \ for any $\varepsilon >0$},$$
hence $\mu_X(A)\leq \sum^\infty_{i=1}\mu_X(A_i)$ and $\mu_X$ is an outer measure.

\smallskip
(viii) For each $A\subseteq \re \times (\re\setminus\{0\})$ we can choose $H_k\subseteq \re$ such that
$L_X(H_k)\supseteq A$ and $\lambda (H_k)\leq \mu_X(A)+2^{-k-1}$. Using the regularity
property of Lebesgue outer measure we can find open sets $G_k\supseteq H_k$ such that
$\lambda (G_k)\leq (H_k)+2^{-k-1}\leq \mu_X(A)+2^{-k}$.

\smallskip
(ix) Obviously $\mu_X\leq \mu_Y$ if $Y\subset X$. $\mu_{\overline X}=\mu_X$ in
virtue of (vi) and (viii).

\smallskip
(x) Using (iii) we obtain
$\mu_X[(\alpha,0) +\beta A]=\inf\{\lambda (G):L_X(G)\supset [(\alpha, 0) +\beta A]\}=\\
\inf\{\lambda [\alpha H+(0, \beta )]:L_X[\alpha H+(0, \beta )]\supseteq [\alpha
A+(0, \beta 0]\}=|\alpha |\inf\{\lambda (H):L_X(H)\supseteq A\}=|\alpha |\mu_X(H).$

\smallskip
(xi) Using (iv) we obtain
$
\mu_{\delta ^{-1} X}(A)=\inf\{\lambda (G):L_{\delta ^{-1}X}(G)\supseteq
A\}=\inf\{\lambda (G):L_X(\delta G)\supseteq\varphi_\delta (A)\}=\inf\{\lambda (\delta ^{-1}H):L_X(H)\supseteq \varphi_\delta (A)\}=|\delta
|^{-1}\mu_X[\varphi_\delta (A)].
$

\smallskip
(xii) By definition, $\mu_X(A)=0$ iff $\forall\,\varepsilon >0\,\exists\,G\subseteq
\re$ such that $\lambda (G)<\varepsilon $ and $L_X(G)\supseteq A$.
In virtue of (viii) $G$ can be chosen to be open and by (ii)
$L_X(G)\supseteq A$ iff
$(a+bX)\cap G\ne \emptyset$ whenever $(a, b)\in A$.

\smallskip
(xiii) It now easily follows given how $\tilde\mu_X$ is defined using 
$\mu_{X\setminus K}$.

\smallskip
(xiv)  (b) $\implies$ ($\mu_{X\setminus K}(A)=0$ for any $K\subseteq X$ finite)
$\implies$ (a)\newline
To prove (a) $\implies$ (b) we take a countable dense subset 
 $X_1=\{x_i\}^\infty_{i=1}$ of $X$, and  put
$X_k=\{x_i\}^\infty_{i=k}$ for any $k\in\rn$.\newline
(a) $\implies$ ($\mu_{X_k}(A)=0$ for any $k\in\rn$) $\implies$ (for any $k\in\rn$ and any
$\varepsilon >0$ there exist open sets $G_k\subseteq \re$ with $\lambda
(G_k)<2^{-k}\varepsilon $ and $(a+bX_k)\cap G_k\ne \emptyset$ whenever $(a, b)\in A$
$\implies$ (b), putting $G=\cup^\infty_{k=1}G_k$.
\end{proof}

\begin{theorem}\label{two} For any set $X\subseteq \re$ the following conditions are equivalent:
\begin{enumerate}
\item [(i)] For every $\varepsilon >0$ there exists a closed set $C\subseteq \re$ with
$\lambda (\re\setminus C)<\varepsilon $ containing no similar copy of~$X$.
\item [(ii)] $\mu_X\equiv0$ on $\re \times (\re\setminus\{0\})$.
\item [(iii)] There exists a set $U\subseteq \re \times (\re\setminus\{0\})$
with nonempty interior such that
$\mu_X(U)=0$.
\end{enumerate}
\end{theorem}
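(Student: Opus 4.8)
The plan is to prove the three-way equivalence by first establishing (i)~$\iff$~(ii) as a direct dictionary between non-universality witnesses and the covering formulation, and then deducing (ii)~$\iff$~(iii) purely from the invariance and outer-measure properties already recorded in Proposition~\ref{first}.

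For (i)~$\implies$~(ii): given $\varepsilon>0$ I would take a closed witness $C$ with $\lambda(\re\setminus C)<\varepsilon$ and set $G=\re\setminus C$, an open set of measure $<\varepsilon$. Since $C$ contains no similar copy $a+bX$, every such copy meets $G$, i.e. $(a+bX)\cap G\ne\emptyset$ for all $(a,b)\in\rmr$; by Proposition~\ref{first}(ii) this says exactly $L_X(G)\supseteq\rmr$, whence $\mu_X(\rmr)\le\lambda(G)<\varepsilon$. Letting $\varepsilon\to 0$ and invoking monotonicity of the outer measure (Proposition~\ref{first}(vii)) yields $\mu_X\equiv 0$. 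The converse (ii)~$\implies$~(i) reverses this: from $\mu_X(\rmr)=0$, Proposition~\ref{first}(viii) furnishes for each $\varepsilon$ an \emph{open} $G$ with $\lambda(G)<\varepsilon$ and $L_X(G)\supseteq\rmr$; then $C=\re\setminus G$ is closed, has complement of measure $<\varepsilon$, and contains no similar copy of~$X$ precisely because $G$ meets all of them.

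The implication (ii)~$\implies$~(iii) is immediate: any $U$ with nonempty interior (say the canonical square $[0,1]\times[1,2]$) satisfies $\mu_X(U)=0$ by monotonicity. The real content is the localization (iii)~$\implies$~(ii). Here I would fix an open rectangle $R=(p,q)\times(s,t)$ inside the interior of $U$, with $0\notin[s,t]$ (possible since $U\subseteq\rmr$ avoids the line $b=0$, so its interior does too), giving $\mu_X(R)\le\mu_X(U)=0$. For $\alpha\in\re$ and $\beta\ne 0$ the affine map $T_{\alpha,\beta}(a,b)=(\alpha+\beta a,\beta b)$ is exactly the transformation $A\mapsto(\alpha,0)+\beta A$ of Proposition~\ref{first}(x), so $\mu_X[T_{\alpha,\beta}(R)]=|\beta|\,\mu_X(R)=0$. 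The geometric observation is that the images $\{T_{\alpha,\beta}(R)\}$ cover all of $\rmr$: given a target $(a^*,b^*)$ with $b^*\ne 0$, choose $r\in(s,t)$ of the same sign as $b^*$ and set $\beta=b^*/r$, placing $b^*$ inside the $b$-interval of the image; then choose $\alpha$ to centre the $a$-interval at $a^*$. Each $T_{\alpha,\beta}(R)$ is open and $\re^2$ is Lindel\"of, so countably many already cover $\rmr$, and countable subadditivity (Proposition~\ref{first}(vii)) forces $\mu_X(\rmr)=0$, hence $\mu_X\equiv 0$ by monotonicity.

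The main obstacle I anticipate is the covering step in (iii)~$\implies$~(ii): one must verify that the two-parameter family $T_{\alpha,\beta}$, in which the scalings of the $a$- and $b$-axes are coupled through the single parameter $\beta$, nonetheless reaches every point of $\rmr$, and in particular attains both signs of~$b$ (handled by permitting $\beta<0$, equivalently by the subcase $s<t<0$). Everything else reduces to the translation and scaling identities of Proposition~\ref{first} together with the outer-measure axioms.
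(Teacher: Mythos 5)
Your proposal is correct and follows essentially the same route as the paper: (i)$\iff$(ii) via the dictionary $C=\re\setminus G$ together with Proposition~\ref{first}(ii),(viii) (which is exactly what Proposition~\ref{first}(xii) packages), and (iii)$\implies$(ii) by covering $\re\times(\re\setminus\{0\})$ with countably many affine images $(\alpha_i,0)+\beta_i U$ and applying Proposition~\ref{first}(x) plus countable subadditivity. The only difference is that you supply details the paper leaves implicit (passing to a rectangle in the interior of $U$, the sign discussion for $\beta$, and the Lindel\"of extraction of a countable subcover), all of which are correct.
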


\begin{proof}

(i) is equivalent to (ii) by Proposition~\ref{first}(xii), where we put $C=\re\setminus G$.
\smallskip
(ii) $\implies$ (iii) is obvious. To prove an opposite implication choose $\alpha _i$, $\beta _i$ ($i=1, 2,
\dots$) such that $\cup^\infty_{i=1}[(\alpha _i, 0) +\beta _i U]=\re \times (\re\setminus\{0\})$
and use
Proposition~\ref{first}(x).
\end{proof}

\section{Deterministic locally periodic layering}

There are various ways how we can try to prove that  certain infinite patterns $X \subseteq \re$ are non-universal. One of many equivalent ways how we can restate this question is the following plane covering problem:  A set $X\subseteq \re$ is non-universal iff
for any $\varepsilon>0$  there is a measurable set $G\subseteq \re$ with $\lambda(G)<\varepsilon$ such that $L_X(G)$ covers $[0,1] \times [1,2]$. Given  $\varepsilon>0$ one can prove that such a set $G$ exists using either deterministic or probabilistic constructions. In this section we will describe how the presence of large patterns in $X$ that are {\it relatively fine} allows to prove in deterministic way that small sets $G$ with $L_X(G)$ covering $[0,1] \times [1,2]$ exist. We will show that if the set $X \subseteq \re$ is {\it arbitrarily relatively fine} (the notion introduced below) then  there is a simple way how to solve our local plane covering problem with $G$ consisting of a finite periodic collection of intervals.

{\bf Sets that are  $\varepsilon$-fine; $\delta$-separated sets.} Let $Y=\{y_i\}^k_{i=1}$ ($k\geq 2$) be real numbers such that $y_1>y_2>\dots  > y_k$.
We call such a set {\it  $\varepsilon$-fine} if $y_i-y_{i+1} \leq  \varepsilon$ for each $i=1, 2, \dots, k-1$.
Similarly, it is called {\it $\delta$-separated} if $y_i-y_{i+1} \geq \delta$ for each $i=1, 2, \dots, k-1$.

In this scale invariant problem it is useful to measure quality of how sets are fine and/or  separated relatively to the size of an interval they occupy. 

{\bf Relatively  $\varepsilon$-fine sets, relatively $\delta$-separated sets.} We call such a set $Y$ 
{\it relatively  $\varepsilon$-fine} if $\frac{y_i-y_{i+1}}{y_1-y_k} \leq  \varepsilon$ for each $i=1, 2, \dots, k-1,$ and
{\it relatively $\delta$-separated} if $\frac{y_i-y_{i+1}}{y_1-y_k} \geq \delta$ for each $i=1, 2, \dots, k-1$.

{\bf Arbitrarily relatively fine sets.} We call a set $X\subseteq \re$ {\it arbitrarily relatively fine} if it has the following property:\\
For every $\varepsilon >0$ there exist elements $y_1>y_2>\dots>y_k$  ($k\geq 2$) in~$X$
such that
$$(1+|y_1|+|y_k|)(y_i-y_{i+1})\leq \varepsilon(y_1-y_k) \text{\ \ for any\ \ }i=1, 2,
\dots, k-1.
$$
For a bounded set $X$ this condition means exactly that for arbitrarily small  $\varepsilon > 0$ there exists
a relatively  $\varepsilon$-fine subset of $X$; for $X$ unbounded it is additionally required that such relatively  $\varepsilon$-fine subset of $X$ can be found  in $\{x \in X: |x|\leq o(\frac{1}{ \varepsilon}) \}$, if  $\varepsilon$ tends to $0$.

As the property of being {\it arbitrarily relatively fine} is described by a limit condition, it is easy to see that  it is stable under removing finitely many elements. Namely, if $X\subseteq \re$ is  {\it arbitrarily relatively fine}, then 
$X \setminus K$ is  as well  for any finite set $K$.

One can equivalently describe the property of $X$ being {\it arbitrarily relatively fine} by
$$\inf\{\frac{X(u, v)}{v-u}(|u|+|v|+1): u,v \in\re, u<v \}=0\,,$$
where $X(u, v)$ denotes the length of the longest component of $(u, v) \setminus \overline X$.

\begin{lemma}\label{three} Let $Y=\{y_i\}^k_{i=1}$ ($k\geq 2$) be real numbers such that $y_1>y_2>\dots y_k\geq 0$
and put $M=\max\{y_i-y_{i+1}:i= 1, \dots, k-1\}$. Then $\mu_Y([0, 1] \times [1, 2])\leq 4M\frac{1+y_1}{y_1-y_k}$.
\end{lemma}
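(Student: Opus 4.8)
The plan is to make the abstract quantity $\mu_Y([0,1]\times[1,2])$ concrete by exhibiting one explicit competitor set $G\subseteq\re$ and bounding $\lambda(G)$: I would take $G$ to be a finite union of closed intervals (``teeth'') all of the common length $\ell=2M$, and arrange that $L_Y(G)\supseteq[0,1]\times[1,2]$, so that the desired inequality follows straight from the definition of $\mu_Y$. By Proposition~\ref{first}(ii), the condition $(a,b)\in L_Y(G)$ means exactly $(a+bY)\cap G\ne\emptyset$, so the whole task is to force the configuration $a+bY=\{a+by_i\}$ to meet $G$ for every $(a,b)\in[0,1]\times[1,2]$. Writing $x_i=a+by_i$, I record three facts that will drive everything: the points satisfy $x_1>\dots>x_k$, the consecutive gaps are $x_i-x_{i+1}=b(y_i-y_{i+1})\le bM\le 2M=\ell$, and they all lie in the span $[x_k,x_1]$, whose length $b(y_1-y_k)$ is at least $y_1-y_k$.

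First I would isolate the elementary \emph{stabbing} observation: if a tooth $[t,t+\ell]$ has its left endpoint $t$ inside the span, i.e.\ $t\in[x_k,x_1]$, then the configuration must hit it. Indeed, let $x_i$ be the smallest of the $x_j$ with $x_j\ge t$ (this exists since $x_1\ge t$); either $x_i$ is the bottom point $x_k$, forcing $x_k=t\in G$, or its lower neighbour satisfies $x_{i+1}<t$, whence $x_i\le x_{i+1}+\ell<t+\ell$ and therefore $x_i\in[t,t+\ell]\subseteq G$. The point here is precisely that the gaps do not exceed $\ell$, so no step of the configuration can leap over a tooth. Consequently it suffices to position the teeth so that \emph{every} span $[x_k,x_1]=[a+by_k,\,a+by_1]$ contains the left endpoint of some tooth.

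Next I would turn this into a covering problem in the parameter box. A tooth with left endpoint $t$ serves exactly the parameters in the slanted strip $R_t=\{(a,b): t-by_1\le a\le t-by_k\}$, since $t\in[a+by_k,a+by_1]$ is equivalent to $(a,b)\in R_t$. At a fixed height $b$ the strip $R_t$ covers an $a$-interval of width $b(y_1-y_k)\ge y_1-y_k$, and crucially two teeth whose left endpoints differ by $D:=y_1-y_k$ produce strips that overlap (or abut) at every height $b\in[1,2]$, because $bD\ge D$. I would therefore place teeth at the arithmetic progression $t_j=t_0+jD$ and choose $t_0$ together with the number $m$ of teeth so that the union of the strips sweeps across all of $[0,1]\times[1,2]$; the no-gap observation collapses this to the two endpoint conditions $t_0\le y_1$ (left edge reaches $a=0$, worst at $b=1$) and $t_0+(m-1)D\ge 1+2y_k$ (right edge reaches $a=1$, worst at $b=2$).

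Finally I would optimise the count. Taking $t_0=y_1$, a single tooth suffices when $y_1\ge 1+2y_k$, and otherwise one may take $m=\lceil(1+2y_k-y_1)/D\rceil+1\le (1+2y_k-y_1)/D+2$. In either case $\lambda(G)\le m\ell=2Mm$, and a direct substitution using $2MD=2M(y_1-y_k)$ makes the $y_1$-terms telescope, giving $\lambda(G)\le 2M(1+y_1)/(y_1-y_k)$, which is in fact a little stronger than the asserted bound. The step I expect to demand the most care is exactly this bookkeeping: the additive end-effects (the ceiling and the need for teeth serving both extremes of the box) must be combined with the factor $D=y_1-y_k$ in just the right way so that the numerator cleans up to $1+y_1$; the generous constant $4$ in the statement leaves ample slack, so a deliberately crude count that ignores the precise value of the ceiling will still land under $4M(1+y_1)/(y_1-y_k)$.
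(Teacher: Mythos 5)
Your proposal is correct and takes essentially the same route as the paper: an explicit finite union of ``teeth'' of length $2M$ placed in arithmetic progression, combined with the same pigeonhole/stabbing argument that the points of $a+bY$, whose consecutive gaps are at most $bM\le 2M$, cannot leap over a tooth whose left endpoint lies in the span $[a+by_k,\,a+by_1]$. The differences are only bookkeeping: the paper spaces its teeth $2M+(y_1-y_k)$ apart and credits each tooth with the wider coverage interval $[u-by_1,\,u+2M-by_k]\times\{b\}$, whereas your spacing of $y_1-y_k$ with the narrower per-tooth strip still lands under the stated bound, in fact with the slightly better constant $2M\tfrac{1+y_1}{y_1-y_k}$.
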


\begin{proof} Let us denote $r=\left\lceil \frac{1+y_1}{2M+y_1-y_k}\right\rceil$,
$u_j=y_1+j(2M+y_1-y_k)$ for any integer $j$ and $G=\cup^r_{j=0}[u_j, u_{j}+2M]$. Then
$u_{r+1}-2y_1>1$ and obviously $r<\frac{1+y_1}{y_1-y_k}$.

As $\lambda (G)=2M(r+1)<4M\frac{1+y_1}{y_1-y_k}$, it is sufficient to prove that
$L_Y(G)\supseteq [0, 1]\times [1, 2]$.

We will show that for any $b\in[1, 2]$ we have
$$L_Y([u_j, u_j+2M])\supseteq [u_j-by_1, u_j+2M-by_k]\times \{b\} \supseteq 
[u_j-by_1, u_{j+1}-by_1] \times \{b\}$$
and, consequently, $L_Y(G)\supseteq [u_0-by_1, u_{r+1}-by_1] \times \{b\}
\supseteq [0, 1] \times \{b\}$.

To prove $L_Y([u, u+2M])\supseteq [u-by_1, u+2M-by_k] \times \{b\}$ whenever $b\in [0,
2]$ and $u$ is arbitrary, keep any real $u$
and $b\in [0, 2]$ fixed, and also $a\in [u-by_1, u+2M-by_k]$.

Let $N=\max\{i:1\leq i\leq k\text{\ and\ } a\geq u-by_i\}$. Then we see that
$u-by_N\leq a\leq u-by_N+2M$. 

\noindent Put $v=by_N+a$. Then $v\in[u, u+2M]$, $v-by_N=a$,
hence $(a, b)\in L_Y(v)\subseteq L_Y([u, u+2M])$.\newline
Thus the first inclusion above is proved and the second one holds for $b\geq 1$ by
definition of~$u_j$.
\end{proof}

\begin{theorem}\label{four} Let  $X\subseteq \re$ be a set that is  {\it arbitrarily relatively fine}, then $X$ is non-universal (modulo similarities) w.r.t.\ the sets with complement of finite Lebesgue measure.  For every $\varepsilon >0$ there exists an open set $G\subseteq \re$ with
$\lambda (G)<\varepsilon $ such that any similar copy of $X$ intersects $G$ in an
infinite set. In another words, the set $ \re \setminus G$ doesn't contain any similar copy of $X \setminus K$, for any finite set~$K$.
\end{theorem}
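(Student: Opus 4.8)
The plan is to deduce everything from a single quantitative estimate, namely that $\mu_X([0,1]\times[1,2])=0$ for every arbitrarily relatively fine $X$, and then to bootstrap to the infinite-intersection statement through $\tilde\mu_X$. First I would record the reduction. By Theorem~\ref{two} it is enough to produce one set with nonempty interior on which $\mu_X$ vanishes, and $Q:=[0,1]\times[1,2]$ is the natural candidate. Since, as noted just before the statement, the property of being arbitrarily relatively fine is preserved when finitely many points are deleted, the same estimate applied to $X\setminus K$ gives $\mu_{X\setminus K}(Q)=0$ for every finite $K$, whence $\tilde\mu_X(Q)=\sup_K\mu_{X\setminus K}(Q)=0$; covering $\re\times(\re\setminus\{0\})$ by countably many affine images $(\alpha_i,0)+\beta_i Q$ (letting $\beta_i$ run through $\pm 2^n$ and $\alpha_i$ through the corresponding grids) and using the scaling identity of Proposition~\ref{first}(x),(xiii) for $\tilde\mu_X$ upgrades this to $\tilde\mu_X\equiv0$. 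Proposition~\ref{first}(xiv) then furnishes, for each $\varepsilon>0$, an open $G$ with $\lambda(G)<\varepsilon$ meeting every copy $a+bX$ in an infinite set, and the reformulation in terms of $\re\setminus G$ and $X\setminus K$ is immediate: if $a+b(X\setminus K)$ avoided $G$ then $(a+bX)\cap G\subseteq a+bK$ would be finite.

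The heart of the matter is the core estimate, and here I would imitate, but re-center, the covering built in Lemma~\ref{three}. Fix $\varepsilon>0$ and choose, by hypothesis, $y_1>\dots>y_k$ in $X$ with $(1+|y_1|+|y_k|)(y_i-y_{i+1})\le\varepsilon(y_1-y_k)$; set $Y=\{y_i\}_{i=1}^{k}$ and $M=\max_i(y_i-y_{i+1})$, so that $(1+|y_1|+|y_k|)M\le\varepsilon(y_1-y_k)$, and in particular $M\le\varepsilon$ because $y_1-y_k\le 1+|y_1|+|y_k|$. The single-interval inclusion proved inside Lemma~\ref{three}, that $L_Y([u,u+2M])\supseteq[u-by_1,\,u+2M-by_k]\times\{b\}$ for every real $u$ and every $b\in[0,2]$, holds irrespective of the signs of the $y_i$; moreover consecutive intervals spaced by $s:=2M+(y_1-y_k)$ produce, at each height $b\in[1,2]$, overlapping $a$-ranges, so that a run $u_j=u_0+js$ ($0\le j\le r$) covers the contiguous slab $[u_0-by_1,\,u_r+2M-by_k]\times\{b\}$. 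I would then choose $u_0$ and $r$ so that this slab contains $[0,1]$ for every $b\in[1,2]$; since $by_1$ and $by_k$ sweep intervals of lengths $|y_1|$ and $|y_k|$ as $b$ runs through $[1,2]$, it suffices to let the $u_j$ cover an $a$-range of length of order $1+|y_1|+|y_k|$, i.e. to take $r+1$ of order $\frac{1+|y_1|+|y_k|}{2M+(y_1-y_k)}$ intervals.

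Adding up, $G=\bigcup_{j=0}^{r}[u_j,u_j+2M]$ has $\lambda(G)=2M(r+1)$ bounded by a fixed multiple of $\frac{M(1+|y_1|+|y_k|)}{y_1-y_k}+M$, which by the inequality above and $M\le\varepsilon$ is a fixed multiple of $\varepsilon$. As $Y\subseteq X$, Proposition~\ref{first}(ix) gives $\mu_X(Q)\le\mu_Y(Q)\le\lambda(G)\le C\varepsilon$, and letting $\varepsilon\to0$ yields $\mu_X(Q)=0$, which together with the first paragraph proves the theorem.

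I expect the one genuinely careful point to be exactly the re-centering just described. Lemma~\ref{three} is stated only for $y_k\ge0$, whereas the arbitrarily-relatively-fine property delivers a fine block $Y$ sitting at an uncontrolled location on the line, possibly entirely negative; one cannot simply translate $X$ to normalize this, because that property is \emph{not} translation invariant. Translating $Y$ alone and invoking Lemma~\ref{three} as a black box only covers a \emph{sheared} copy of $Q$ (via Proposition~\ref{first}(v)), and reassembling $Q$ from its shears is wasteful by a factor that destroys the sharp bound. The point — and the reason the definition carries the weight $(1+|y_1|+|y_k|)$ rather than $1+y_1$ — is that covering the fixed square directly from a general-position bush costs of order $\frac{1+|y_1|+|y_k|}{y_1-y_k}$ intervals, so the weight is calibrated precisely to absorb the position of $Y$; confirming this matching is the step I would check most carefully.
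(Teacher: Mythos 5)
Your proof is correct, and it diverges from the paper's argument at exactly the point you flagged: how to handle the position of the fine pattern on the line. The paper keeps Lemma~\ref{three} as a black box --- that lemma genuinely needs $y_k\ge 0$, since its covering starts at $u_0=y_1$ and its bound involves $1+y_1$ --- and compensates structurally: it extracts from $X$ a countable arbitrarily relatively fine subset $X_1$ contained either in $(0,\infty)$ or in $(-\infty,0)$, treats the negative case by the reflection identity of Proposition~\ref{first}(xi) with $\delta=-1$, proves $\mu_{X_k}\equiv0$ for the tails $X_k=\{x_i\}_{i\ge k}$ of $X_1$, and then concludes via $\tilde\mu_{X_1}\equiv0$ and Proposition~\ref{first}(xiv). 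You instead prove a sign-free version of Lemma~\ref{three}: you correctly observe that the single-interval inclusion $L_Y([u,u+2M])\supseteq[u-by_1,\,u+2M-by_k]\times\{b\}$ nowhere uses the signs of the $y_i$, and that the weight $(1+|y_1|+|y_k|)$ in the definition is calibrated precisely to pay for the horizontal range of order $1+|y_1|+|y_k|$ that a general-position bush must sweep; this yields $\mu_Y(Q)\le C\,M(1+|y_1|+|y_k|)/(y_1-y_k)\le C\varepsilon$ for $Q=[0,1]\times[1,2]$ with no case analysis. Your route eliminates the paper's extraction of a one-signed subset (the one step the paper asserts as ``easy to observe'' without argument) and the reflection step; the paper's route gets to quote Lemma~\ref{three} verbatim. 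Your bootstrap --- $\mu_{X\setminus K}(Q)=0$ for every finite $K$ by stability of fineness under finite deletions, hence $\tilde\mu_X(Q)=0$, then $\tilde\mu_X\equiv0$ by covering $\rmr$ with affine images of $Q$, then Proposition~\ref{first}(xiv) --- is equivalent to the paper's tail argument; it could be shortened by applying Theorem~\ref{two} to each $X\setminus K$ instead of redoing the covering at the level of $\tilde\mu$. (Both you and the paper also implicitly use that an arbitrarily relatively fine set is infinite, which is immediate since a relatively $\varepsilon$-fine set has at least $1/\varepsilon$ gaps.)

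One assertion in your closing paragraph is false, although nothing in your actual proof depends on it: being arbitrarily relatively fine \emph{is} translation invariant. Indeed $1+|y_1+c|+|y_k+c|\le(1+2|c|)(1+|y_1|+|y_k|)$, so a pattern witnessing the condition for $\varepsilon/(1+2|c|)$ in $X$ witnesses it for $\varepsilon$ in $X+c$. The correct version of your objection is: (i) when $X$ is unbounded below, no single translation places all of $X$ in $[0,\infty)$, so one cannot reduce to Lemma~\ref{three} by translating $X$ once and for all; and (ii) translating each pattern $Y$ separately and undoing the shear $\psi_{c}$ of Proposition~\ref{first}(v) afterwards forces one to cover a parallelogram of horizontal extent of order $1+|y_k|$, an extra factor that the hypothesis cannot absorb (take $y_1=R$, $y_k=-R$ to see the loss) --- which is exactly the waste your direct construction avoids.
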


\begin{proof}
(a) Let us prove first that if $X$ is contained in $(0, \infty)$ and it is  {\it arbitrarily relatively fine}
then $\mu_X\equiv 0$. Using Lemma~\ref{three} under these assumptions we easily obtain that $\mu_X([0, 1]\times [1, 2])=0$ and hence
$\mu_X\equiv 0$ by Theorem~\ref{two}.

(b) If $X\subseteq \re$ is any set  that is {\it arbitrarily relatively fine}, then it is easy to
observe that there exists a countable {\it arbitrarily relatively fine} subset $X_1=\{x_i\}_{i=1}^{\infty}$ 
of $X$ such that either $X_1\subseteq (0, \infty)$, 
or $X_1\subseteq (-\infty, 0)$. We also denote $X_k=\{x_i\}^\infty_{i=k}$ for
any $k=2, 3, \dots$. It is easy to see that each $X_k$ is {\it arbitrarily relatively fine} as well.
By part (a) (and using Proposition~\ref{first}(xii) in case $X_1\subseteq (-\infty, 0)$) we
obtain that $\mu_{X_k}\equiv 0$ for any $k=1, 2, \dots$, hence $\tilde
\mu_{X_1}\equiv 0$. Due to Proposition~\ref{first}(xiv) for every $\varepsilon >0$ there exists an open set
$G\subseteq \re$ with $\lambda (G)<\varepsilon $ such that any similar copy of $X_1$
intersects $G$ in an infinite set.
\end{proof}

\begin{example}[bounded]\label{five}
	 It is easy to see that any zero-sequence that converges to $0$ so
slowly that $\lim_{i\to \infty}\frac{x_{i+1}}{x_i}=1$ (as assumed in  Falconer's result) is {\it arbitrarily relatively fine}.
But a zero sequence may be {\it arbitrarily relatively fine} even if it decreases at time much faster if it slows down accordingly from time to time, as in the following example.

We choose first a subsequence $\{x_{k^2}\}^\infty_{k=1}$ convergent to $0$ as fast as we wish,
and then choose $0< \varepsilon_k < \frac{x_{(k+1)^2}-x_{k^2}}{4k^2}$. We will introduce the other elements of the sequence in such a way that, for every $k$, $X_k=\{x_{k^2+i}=x_{k^2}+i\varepsilon_k:
i=0,1,\dots, 2k\}$ is a $(2k+1)$-tuple that is $\frac{1}{2k}$-fine. Of course, the patterns that are relatively fine needn't be so regular, we used equally spaced patterns above for convenience only.
\end{example}

\begin{example}[unbounded]
Let $\{x_k\}^\infty_{k=1}$ be an increasing sequence of
positive numbers tending to $+\infty$ such that for every $\varepsilon >0$ there
exist integers $1\leq m<n$ such that $x_{k+1}-x_k <\varepsilon
\left(1-\frac{x_m}{x_n}\right)$ for any $k=m, m+1, \dots, n-1$. Then $X$ is {\it arbitrarily relatively fine}.
This is implied by $x_k\to+\infty$ and $(x_{k+1}-x_k)\to0$, for example, but it can be achieved also in sequences with many large gaps among $x_{k+1}-x_k$ , as in the following example.

We choose first a sequence $\{y_k\}^\infty_{k=1}$ that tends to $+\infty$ arbitrarily fast; it will be a subsequence of our example. For every $k$, in a short left neighborhood of $y_k$ we introduce
a pattern of new $\omega(y_k)$ equally spaced elements, so that it will be $o(\frac{1}{y_k})$-fine.
Any sequence created this way will be  {\it arbitrarily relatively fine}.
\end{example}

\begin{remark} Any set $X$ containing the patterns that are relatively fine, then has to contain the  patterns that are both, fine and well separated. In particular, it is easy to see that any set $Y=\{y_i\}^k_{i=1}$ ($k\geq 2$) of real numbers with $y_1>y_2>\dots y_k$ that is {\it  relatively $\varepsilon$-fine} contains a subsequence (with the first term $y_1$ and the last one $y_k$) that is both, relatively $\varepsilon$-separated and  relatively $3\varepsilon$-fine. Hence arbitrarily relatively fine sets contain, for arbitrarily small $\varepsilon >0$, relatively$\varepsilon$-separated and  relatively $3\varepsilon$-fine sets; the cardinality of such sets
is bounded from above by $\frac{1}{\varepsilon}+1$ and from below by $\frac{1}{3\varepsilon}+1$.
We have seen by the deterministic construction that presence of such patterns for arbitrarily small $\varepsilon >0$ in a bounded set X implies non-universality of $X$. In Section~\ref{sec-five} we will prove using probabilistic methods that the presence in $X$ of much smaller relatively $\varepsilon$-separated subsets (with arbitrarily small $\varepsilon>0$) is already sufficient for the proof of non-universality, namely the cardinality $\omega(|\ln{\varepsilon}|)$ of relatively $\varepsilon$-separated  subsets  is sufficient.
\end{remark}

\section{Avoiding almost all similar copies}

One can observe from the proof of universality of a finite set $X \subseteq \re$ that for a Lebesgue measurable set $C \subseteq \re$ of positive measure there are many similarity mappings $(a+bX)$ for which $(a+bX) \subseteq C$, namely $\lambda^2 \{(a, b) \in \rmr:  (a+bX) \subseteq C\}>0$.
So it is natural to consider also universality of sets in such stronger sense;  $X$ being {\it  strongly universal (modulo similarities) w.r.t.\ the sets with complement of finite Lebesgue measure} if  
$\lambda^2 \{(a, b) \in \rmr:  (a+bX) \subseteq \re \setminus G\}>0$ whenever $\lambda(G)<\infty$.
This will lead to a somewhat relaxed notion of non-universality;  $X$ is not {\it  strongly universal (modulo similarities) w.r.t.\ the sets with complement of finite Lebesgue measure} if  for every $\varepsilon >0$ there exists an open set $G\subseteq \re$ with $\lambda (G)<\varepsilon $ such that
$\lambda^2 \{(a, b) \in \rmr:  (a+bX) \subseteq \re \setminus G\}=0$. 
An original Erd\"os problem to prove non-universality of any bounded infinite set is relaxed to a
problem that is much easier. One can find small sets $G$ for which $\re \setminus G$ avoids almost all similar copies of~$X$. In terms of our reformulation as a plane covering problem,
one aims to cover almost everything, rather than everything. In this section we will study covering properties of sets $L_X(G)$. Our geometric method allows to deal with rather general Borel measures in the plane, not merely the Lebesgue measure.As we consider unbounded sets $X$ as well, it is an interesting question to understand which infinite sets are strongly universal. The following notion will play an important role.

\medskip

{\bf Linearly bounded locally finite sets.} We call a set $X \subseteq \re$ {\it a linearly bounded locally finite set} if
$$\sup\{n^{-1}\card(X\cap[-n, n]):n\in\rn\}<\infty,$$
or, equivalently, $\card(X\cap[-u, u]) \leq Mu$ for a constant $M$ and each $u \geq1$.

\begin{theorem}  Let $X\subseteq \re$. We can observe the following dichotomy concerning strong universality/non-universality modulo similarities w.r.t.\ the sets with complement of finite measure.

\begin{enumerate}
\item [(a)] If $X$ is a linearly bounded locally finite set then it is strongly universal modulo similarities w.r.t. the sets with complement of finite measure. In particular, there is a constant $\varepsilon>0$ such that whenever $G\subseteq \re$ is a Lebesgue measurable set with  $\lambda(G)<\varepsilon$, then the set $\{(a, b) \in \re\times [1, 2]: (a+bX) \subseteq (\re \setminus G)\}$ of restricted scale similarities that put $X$ into $(\re \setminus G)$ has infinite $2$-dimensional Lebesgue measure.

\item [(b)] If $X$ is not a linearly bounded locally finite set then it is not strongly universal modulo similarities w.r.t.\ the sets with complement of finite measure. For every $\sigma$-finite Borel measure $\nu$ in $\rmr$ and for every $\varepsilon >0$ there exists an open set $G\subseteq \re$ with $\lambda (G)<\varepsilon $ such that $(a+bX)\cap G$ is infinite for $\nu$ almost every $(a, b)\in\rmr$.
\end{enumerate}
\end{theorem}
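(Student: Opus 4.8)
For part~(a) let $M$ be the linear bound, so that $\card(X\cap[-u,u])\le Mu$ for every $u\ge1$. The plan is a pure covering estimate: for measurable $G\subseteq\re$ and $T\ge1$ I bound the two dimensional measure of the covered part $L_X(G)\cap([-T,T]\times[1,2])$. Slicing at height $b$, using $\{a\in[-T,T]:(a+bX)\cap G\ne\emptyset\}=[-T,T]\cap\bigcup_{x\in X}(G-bx)$, subadditivity and Tonelli, I obtain
$$\lambda^2\bigl(L_X(G)\cap([-T,T]\times[1,2])\bigr)\le\int_G J(s)\,ds,\qquad J(s):=\int_1^2\card\Bigl(X\cap\bigl[\tfrac{s-T}{b},\tfrac{s+T}{b}\bigr]\Bigr)\,db.$$
The crux is the uniform bound $J(s)\le CMT$ with an absolute constant $C$, valid for all $s$ and all $T\ge1$, and this is exactly where linear boundedness enters: for $|s|>8T$ every $x$ contributing to $J(s)$ has $|x|\ge|s|/4$, so it is seen for a range of $b$ of length at most $2T/|x|\le 8T/|s|$, while the number of such $x$ is at most $M(|s|+T)$, and the product is $O(MT)$; for $|s|\le 8T$ one simply uses $\card(X\cap[-9T,9T])\le 9MT$ and a $b$-range of length at most $1$. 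Consequently $\lambda^2(L_X(G)\cap([-T,T]\times[1,2]))\le CMT\lambda(G)$, so the good set $\{(a,b)\in\re\times[1,2]:(a+bX)\subseteq\re\setminus G\}$ meets $[-T,T]\times[1,2]$ in measure at least $T(2-CM\lambda(G))\to\infty$ as soon as $\lambda(G)<\varepsilon:=1/(CM)$; this is the quantitative assertion, and strong universality for an arbitrary $G$ with $\lambda(G)<\infty$ follows by rescaling, since by Proposition~\ref{first}(iii) the good set is the complement of $L_X(G)$ and $L_X(\beta G)=\beta L_X(G)$, so replacing $G$ by $\delta^{-1}G$ with $\delta>\lambda(G)/\varepsilon$ reduces to the case already treated.

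For part~(b) suppose $X$ is not linearly bounded, so $\card(X\cap[-n,n])/n\to\infty$ along some $n_j\to\infty$. I first reduce the statement. Since $\nu$ is $\sigma$-finite and $L_X(G)$ is open whenever $G$ is open (being $\bigcup_{x\in X}\{(a,b):a+bx\in G\}$), and since removing finitely many points from $X$ leaves it not linearly bounded, it suffices---by the tail device in the proof of Proposition~\ref{first}(xiv) and by splitting $\rmr$ into countably many compact boxes---to produce, for a finite $\nu$ on a fixed box $A=[-R,R]\times[\beta,2\beta]$ and for each $m$, an open $G_m$ with $\lambda(G_m)<\varepsilon2^{-m}$ and $\nu\{(a,b)\in A:(a+bX)\cap G_m=\emptyset\}<2^{-m}$; then $G=\bigcup_mG_m$ has $\lambda(G)<\varepsilon$ and its miss set, being contained in $\bigcap_m\{\text{miss for }G_m\}$, is $\nu$-null, while the passage to infinite intersection is handled by applying this to the tails $X\setminus\{x_1,\dots,x_k\}$. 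To build $G_m$ I fix a scale $n$ with $\kappa:=\card(X\cap[-n,n])\ge cn$, $c$ large; all relevant points $a+bx$ lie in one interval $J$ of length $O(\beta n)$, and I take $G$ to be a $q$-periodic family of intervals of relative density $\theta$ truncated to $J$, with $\theta$ of order $1/(\beta n)$ so that $\lambda(G)$ is below target. Averaging over the phase $t$ of the pattern and writing $N(t;a,b)=\card\{x\in X\cap[-n,n]:a+bx\in t+G\}$, the circle mean is $\bar N=\theta\kappa$, of order $c\to\infty$, and Chebyshev gives $\lambda\{t:N(t;a,b)=0\}\le q\,\mathrm{Var}_tN/\bar N^2$ for each $(a,b)$; integrating against $\nu$ and selecting a good phase produces a deterministic $G$ whose miss set is $\nu$-small, provided $\mathrm{Var}_tN$ is of order $\bar N$ for $\nu$-most $(a,b)$.

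The main obstacle is this variance control. The diagonal of $\mathrm{Var}_tN$ contributes $\bar N$, but the off-diagonal terms are governed by the autocorrelation of the periodic pattern at the lags $b(x-x')$ and become large when many pairs are in near-resonance, $b(x-x')$ within $\theta q$ of $q\mathbb Z$; this is precisely the clustered regime, where the $\kappa$ points sit in a window shorter than the period, and it defeats any single choice of scale $q$ when the cluster is self-similar. I expect to break this by a dichotomy on the local structure of $\overline X$. If $\overline X$ contains an interval, then every similar copy is dense in a fixed subinterval, so a single open dense set $G$ of measure $<\varepsilon$ meets every copy in an infinite set, for all $(a,b)\in\rmr$ at once. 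Otherwise I run the variance estimate at a period $q$ chosen finer than the cluster width and generic relative to both $X$ and $\nu$, so that the resonant lags carry little $\nu$-mass, and then combine the catchers built from the distinct scales $n_j$: because these scales have widely separated magnitudes, the exceptional strips they leave are transversal in the $(a,b)$ plane and their joint $\nu$-measure vanishes. Making this genericity-and-multiscale step effective uniformly over an arbitrary $\sigma$-finite $\nu$ is the delicate heart of the argument.
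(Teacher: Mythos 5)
Your part~(a) is correct and is essentially the paper's argument in a different packaging: where the paper splits $G$ into a near part and far parts and bounds the horizontal width $\varepsilon_i/x$ of each strip $L_{\{x\}}(G_i)$ inside the rectangle, you integrate slices via Tonelli and prove the uniform bound $J(s)\le CMT$; the two computations are the same counting estimate, and your rescaling step matches the paper's closing remark.

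Part~(b), however, has a genuine gap, and you have located it yourself: the variance control for your single $q$-periodic catcher with one random phase. Because all the events ``$a+bx\in t+G$'' are driven by the single phase $t$, pair correlations at resonant lags $b(x-x')$ destroy the Chebyshev bound exactly in the clustered regime, which is unavoidable here (a set can fail to be linearly bounded precisely by having huge clusters inside short windows). Your proposed repairs --- a dichotomy on whether $\overline X$ contains an interval, a ``generic'' period, and a multiscale transversality claim --- are not proofs: the hard branch is the nowhere-dense clustered case, and no argument is given that resonant lags can be made to carry little $\nu$-mass uniformly over an arbitrary $\sigma$-finite $\nu$. The paper avoids the second moment entirely. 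Instead of one periodic pattern it uses \emph{many independent translates of one very short interval}: for each scale $m$ it picks $n_m$ with $\card(X\cap[-n_m,n_m])\ge 2^m n_m$, extracts a $\delta_m$-separated subset $Z_m$ of that cardinality with $\delta_m<2^{-m}$ chosen \emph{below the minimum gap} of the chosen points, and takes $p_m\approx 2^{-m}\delta_m^{-1}$ copies of the interval $(0,\delta_m)$, each translated by its own independent uniform shift. Separation makes each copy's slice measure exactly $\card(Z_m)\,\delta_m$ (no overlap losses, clustering is neutralized by shrinking $\delta_m$, and the cost is absorbed in the number of copies, not in total length, since $p_m\delta_m\le 2^{-m}$). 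Independence across copies then gives a first-moment bound of the form
$$\nu\Bigl(I\setminus\bigcup_i\bigl((u_i,0)+L_X(H_i)\bigr)\Bigr)\ \le\ \int_I e^{-\sum_i (B_i-A_i)^{-1}\lambda\bigl([L_X(H_i)]^b\cap[a-B_i,a-A_i]\bigr)}\,d\nu(a,b)$$
for some choice of shifts (the paper's Lemma~\ref{eight}, a Fubini computation plus $1-t\le e^{-t}$), and the divergence of the exponent sum (Lemma~\ref{ten}, each scale contributing at least $2^{-4}$) drives the right-hand side to $0$. A Borel--Cantelli style exhaustion (Lemma~\ref{nine}) upgrades this to ``$\nu$-a.e.\ point lies in infinitely many translated catchers'', and the tail device you also use ($\tilde\mu_X$ and Proposition~\ref{first}(xiv)) yields infinite intersections. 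The conceptual point you are missing is that independence of the translations across copies makes miss probabilities multiply, so only first moments are ever needed; no genericity or transversality of scales has to be established.
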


\begin{proof}
	
(a)  Let $X$ be a linearly bounded  locally finite set, and $M$ be a constant such that $\card(X\cap [-u, u]) \leq 
Mu\,$ whenever $u \geq1$. We will show first that  there is a constant $K$ such that $\lambda^2(L_X(G) \cap ([-1, 1]\times [1, 2])) \leq K \lambda(G)$ whenever $G\subseteq \re$. As a constant $K \geq 2$ will do, we need to consider sets $G$ with $\lambda(G) \leq 1$ only. Any set $G\subseteq \re$ split into its subsets $G_0=G \cap [-3, 3]$, $G_+ = G \cap (3, +\infty)$, $G_- = G \cap (- \infty, -3)$. Let's estimate from above the measure $\lambda^2(L_X(G_0) \cap ([-1,1]\times [1, 2]))$ first. One can check that the set of those $x\in X$ for which the set $L_{\{x\}}([-3, 3])$ intersects the rectangle $[-1,1]\times [1, 2]$  is exactly $X\cap [-4, 4]$, so its cardinality is at most $4L$ and we easily get that $\lambda^2(L_X(G_0) \cap ([-1,1]\times [1, 2])) \leq 4M \lambda(G_0)$.

Now we estimate  $\lambda^2(L_X(G_+) \cap ([-1,1]\times [1, 2]))$ from above in terms of $ \lambda(G_+)$.
(We can assume that $G$ is open; otherwise we take its open superset of slightly larger measure.) Consider all components $G_i=(d_i, d_i+\varepsilon_i)$, $i=1, 2, \dots$, of the set $G_+$, one by one. Remind that $d_i \geq 3$ then, and that we can confine to $\varepsilon_i \leq 1$.
The set of those $x\in X$ for which the set $L_{\{x\}}(G_i)$ intersects the rectangle $[-1,1]\times [1, 2]$  is exactly $X\cap (\frac{d_i-1}{2}, d_i+1+\varepsilon_i)$. While its cardinality, which is at most $M(d_i+1+\varepsilon_i) \leq 2Md_i$, can be arbitrarily large with increasing $d_i$, we will show that 
$\lambda^2(L_X((d_i, d_i+\varepsilon_i)) \cap ([-1,1]\times [1, 2]))/ \varepsilon_i$ can be bounded from above independently of how large $d_i$ can be. This is because the width of the strip $L_{\{x\}}((d_i, d_i+\varepsilon_i))$ is small for $x$  large, its width in horizontal direction is $\frac{\varepsilon_i}{x}$, and the area in which it intersects $[-1,1]\times [1, 2]$ is at most
 $\lambda^2(L_{\{x\}}((d_i, d_i+\varepsilon_i)) \cap ([-1,1]\times [1, 2])) \leq 
\frac{3 \varepsilon_i}{x}$.

For each $x$ belonging to $X\cap (\frac{d_i-1}{2}, d_i+1+\varepsilon_i) \subseteq X\cap (\frac{d_i}{4}, 2d_i)$ this area will be at most $\frac{12 \varepsilon_i}{d_i}$, and as the number of them is at most $2Md_i$ we get  $\lambda^2( L_X((d_i, d_i+\varepsilon_i)) \cap ([-1,1]\times [1, 2])) \leq 
24M \varepsilon_i$.

After summing over all components of $G_+$ we get $\lambda^2( L_X(G_+) \cap ([-1,1]\times [1, 2])) \leq 24M \lambda(G_+)$.

By symmetry we can get the same estimate with $G_-$ in place of $G_+$. Taking $K= \max(24M, 2)$ we finally get
$\lambda^2(L_X(G) \cap ([-1, 1]\times [1, 2])) \leq K \lambda(G)$ whenever $G\subseteq \re$, as needed.

So if $G\subseteq \re$ is a Lebesgue measurable set with  $\lambda(G)<\frac{2}{K}$, then the set $\{(a, b) \in [-1, 1] \times [1, 2]: (a+bX) \subseteq (\re \setminus G)\}$, which is exactly 
$([-1, 1]\times [1, 2]) \setminus L_X(G)$ has its measure
$\lambda^2 \{(a, b) \in [-1, 1] \times [1, 2]: (a+bX) \subseteq (\re \setminus G)\} \geq 2-K \lambda(G)>0$.
As the problem is translation invariant in $a$-variable, the same measure estimate will apply to any rectangle $[u, u+2] \times [1, 2]$ in place of $[-1, 1] \times [1, 2]$, hence
$\lambda^2 \{(a, b) \in \re \times [1, 2]: (a+bX) \subseteq (\re \setminus G)\}=\infty$.  By scale invariance
of the problem we can conclude universality of $X$ modulo similarities w.r.t. the sets with complement of finite measure (and not only w.r.t. the sets with complement of sufficiently small measure, as stated in this proof).

That completes the proof of part (a).

\smallskip

(b) The rest of this section is devoted to developing techniques leading to the proof of part (b)
that is contained in Theorem~\ref{twelve}  below.
\end{proof}

\begin{lemma}\label{eight} Let $E_1$, $E_2$, \dots, $E_k$ be Borel subsets of $\re^2$, $\nu
$ be a Radon measure in $\re^2$ and $U=\prod^k_{i=1} [A_i, B_i]$ be a bounded
interval in $\re^k$. Then there exists $u=(u_1, u_2, \dots, u_k)\in U$ such that
$$\nu\left(\re^2\setminus \bigcup^k_{i=1}\left((u_i, 0)+E_i\right)\right)\leq
\int_{\re^2} e^{-\sum^k_{i=1}\left(B_i-A_i\right)^{-1}\lambda\left(E^b_i\cap [a-B_i,
a-A_i]\right)} \\d\nu ((a,b)).$$
\end{lemma}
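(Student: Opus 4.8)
\emph{Plan.} The natural approach is the probabilistic first-moment method. I would choose the shifts $u_1,\dots,u_k$ independently at random, with $u_i$ distributed uniformly on $[A_i,B_i]$, i.e.\ according to the normalized Lebesgue measure $P_i$ of density $(B_i-A_i)^{-1}$ on $[A_i,B_i]$; let $P=\prod_{i=1}^k P_i$ be the corresponding product probability measure on $U$. The goal is to show that the \emph{expected} $\nu$-measure of the uncovered set $\re^2\setminus\bigcup_{i=1}^k((u_i,0)+E_i)$ does not exceed the right-hand side of the asserted inequality. Since this expectation is an average over $U$ with respect to the probability measure $P$, there must then exist at least one $u\in U$ for which the uncovered set has $\nu$-measure at most that average, which is exactly the conclusion.

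\emph{Pointwise probability.} The first key step is a pointwise computation. A point $(a,b)$ lies in the complement of $\bigcup_i((u_i,0)+E_i)$ precisely when $a-u_i\notin E_i^b$ for every $i$. By independence of the $u_i$ the $P$-probability of this event factors, and for each factor the substitution $t=a-u_i$ turns the uniform law of $u_i$ on $[A_i,B_i]$ into the uniform law of $t$ on $[a-B_i,a-A_i]$, so that
$$P\bigl[a-u_i\in E_i^b\bigr]=(B_i-A_i)^{-1}\lambda\bigl(E_i^b\cap[a-B_i,a-A_i]\bigr)=:p_i(a,b).$$
Hence the probability that $(a,b)$ is uncovered equals $\prod_{i=1}^k\bigl(1-p_i(a,b)\bigr)$, and the elementary bound $1-p\le e^{-p}$ applied to each factor yields the crucial estimate
$$\prod_{i=1}^k\bigl(1-p_i(a,b)\bigr)\le \exp\Bigl(-\sum_{i=1}^k p_i(a,b)\Bigr),$$
whose right-hand side is exactly the integrand appearing in the lemma.

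\emph{Interchange of integration.} It remains to average over $u$ and integrate over $(a,b)$, and the step I expect to require the most care is justifying the interchange of the two integrations by Tonelli's theorem. For this I would first record that the indicator $F(u,a,b)=\prod_{i=1}^k\mathbf{1}[(a-u_i,b)\notin E_i]$ is a jointly Borel measurable, nonnegative function on $U\times\re^2$, because each factor is the indicator of the preimage of the Borel set $E_i$ under the continuous map $(u_i,a,b)\mapsto(a-u_i,b)$. Since $\nu$ is a Radon measure on the $\sigma$-compact space $\re^2$ it is $\sigma$-finite, and $P$ is a probability measure, so Tonelli applies to $P\times\nu$. Integrating $F$ in $(a,b)$ first recovers $\nu$ of the uncovered set as a function of $u$, while integrating in $u$ first recovers $\prod_i(1-p_i(a,b))$ by the factorization above; thus
$$\int_U \nu\Bigl(\re^2\setminus\bigcup_{i=1}^k((u_i,0)+E_i)\Bigr)\,dP(u)=\int_{\re^2}\prod_{i=1}^k\bigl(1-p_i(a,b)\bigr)\,d\nu(a,b)\le\int_{\re^2}e^{-\sum_i p_i(a,b)}\,d\nu(a,b).$$
Finally, since the $P$-average of the nonnegative quantity $\nu(\re^2\setminus\bigcup_i((u_i,0)+E_i))$ is bounded by the last integral, this quantity is at most that integral for at least one $u\in U$, which is the claimed inequality. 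Everything apart from the measurability and $\sigma$-finiteness bookkeeping needed for Tonelli is the standard averaging argument, so that bookkeeping is the only genuinely technical point.
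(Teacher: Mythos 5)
Your proposal is correct and is essentially the paper's own argument: the paper computes the mean value $J=[\lambda^k(U)]^{-1}\int_U \nu\bigl(\re^2\setminus\bigcup_i((u_i,0)+E_i)\bigr)\,d\lambda^k(u)$, swaps the integrals by Fubini, factorizes over $i$, and applies $1-p\le e^{-p}$, which is exactly your uniform-random-shifts expectation phrased without probabilistic language. Your added attention to joint Borel measurability and the $\sigma$-finiteness of the Radon measure is sound bookkeeping that the paper leaves implicit, but the route is the same.
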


\begin{proof}
For a fixed $u\in U$ let us denote by $F_u$ the characteristic function of the set
$\re^2\setminus\cup^k_{i=1}((u_i, 0)+E_i)$. Obviously, $F_u((a,
b))=\prod^k_{i=1}(1-\chi_{E_i}((a-u_i, b)))$. Put $J=\left[ \lambda
^k(U)\right]^{-1}\int_U\int F_u((a, b))\,d\nu d\lambda ^k(u)$, the mean value of the function
$u\mapsto \nu (\re^2\setminus \bigcup^k_{i=1}((u_i, 0)+E_i))$
over $u\in U$. 

Using Fubini theorem we obtain
\begin{align*}
J&=\left[\prod^k_{i=1}(B_i-A_i)\right]^{-1}\int^{B_1}_{A_1}\int^{B_2}_{A_2}\dots
\int^{B_k}_{A_k}\int_{\re^2}F_u((a, b))\,d\nu ((a, b))d\lambda (u_k)d\lambda
(u_{k-1})\dots d\lambda (u_1)\\
&=\int_{\re^2}\prod^k_{i=1}\left[(B_i-A_i)^{-1}\int^{B_i}_{A_i}(1-\chi_{E_i}((a-u_i, b))\,d\lambda (u_i)\right]\,d\nu ((a, b))\\
&=\int_{\re^2}\prod^k_{i=1}\left[1-(B_i-A_i)^{-1}\lambda (E^b_i\cap [a-B_i,
a-A_i])\right]\,d\nu ((a, b))\\
&\leq \int_{\re^2} e^{-\sum^k_{i=1}(B_i-A_i)^{-1}\lambda (E^b_i\cap [a-B_i,
a-A_i])}\,d\nu ((a, b)).
\end{align*}
As $J$ is the mean value of the function $u\mapsto \nu (\re^2\setminus \bigcup^k_{i=1}((u_i, 0)+E_i))$
over $U$, the existence of $u\in U$ satisfying the above inequality follows.
\end{proof}

\begin{lemma}\label{nine} Let $\nu$ be a Radon measure in $\re^2$ and
$\{E_i\}^\infty_{i=1}$ be a sequence of Borel subsets of $\re^2$ for which there
exist bounded intervals $[C_i, D_i]\subseteq \re$ such that
\begin{align} \sum^\infty_{i=1}(1+D_i-C_i)^{-1}\lambda (E^b_i\cap [C_i, D_i])=\infty\text{\ \ for
$\nu$-almost every $(a, b)\in\re^2$.}    \label{maxP}\end{align}


Then there exist $\{u_i\}^\infty_{i=1}\subseteq\re$ such that $\nu$-almost every
$x\in\re^2$ is in infinitely many of the sets $\{(u_i, 0)+E_i\}^\infty_{i=1}$.
\end{lemma}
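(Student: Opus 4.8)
The plan is to reduce this single infinite covering statement to a countable family of \emph{finite} covering problems, each handled by Lemma~\ref{eight}, and then to glue the solutions together by a Borel--Cantelli argument. First I would exhaust $\re^2$ by the compact squares $K_j=[-j,j]\times[-j,j]$, on each of which the Radon measure $\nu$ is finite, and partition the index set into consecutive finite blocks $I_j=\{n_{j-1}+1,\dots,n_j\}$ with $n_0=0$. The goal for block $I_j$ is to choose the shifts $\{u_i\}_{i\in I_j}$ so that block $I_j$ alone covers $\nu$-almost all of $K_j$ up to an error at most $2^{-j}$.

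The key computation is to feed Lemma~\ref{eight} the correct choice intervals so that hypothesis~\eqref{maxP} is converted into a usable lower bound on the exponent. For $i\in I_j$ I would take $[A_i,B_i]=[-j-D_i,\;j-C_i]$, which is a legitimate interval since $C_i\le D_i$ gives $B_i-A_i=2j+(D_i-C_i)>0$. For every $a\in[-j,j]$ one then has $[C_i,D_i]\subseteq[a-B_i,a-A_i]$, whence $\lambda(E_i^b\cap[a-B_i,a-A_i])\ge\lambda(E_i^b\cap[C_i,D_i])$; and since $2j+(D_i-C_i)\le 2j(1+D_i-C_i)$ for $j\ge1$, the exponent occurring in Lemma~\ref{eight} is bounded below, at every point of $K_j$, by $\tfrac1{2j}S_j(a,b)$, where $S_j(a,b)=\sum_{i\in I_j}(1+D_i-C_i)^{-1}\lambda(E_i^b\cap[C_i,D_i])$. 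Applying Lemma~\ref{eight} to the finite Radon measure $\nu|_{K_j}$ then yields shifts $\{u_i\}_{i\in I_j}$ with
$$\nu\Big(K_j\setminus\bigcup_{i\in I_j}\big((u_i,0)+E_i\big)\Big)\le\int_{K_j}e^{-\frac1{2j}S_j(a,b)}\,d\nu(a,b).$$

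It remains to force this integral below $2^{-j}$, and this is exactly where \eqref{maxP} is used. Since the full series diverges for $\nu$-a.e.\ $(a,b)$, removing the finitely many initial terms leaves a divergent tail, so the partial sums $S_j$ over the block $\{n_{j-1}+1,\dots,n_j\}$ tend to $+\infty$ $\nu$-a.e.\ as $n_j\to\infty$. Hence $e^{-\frac1{2j}S_j}\to0$ pointwise a.e.\ on $K_j$, and, the integrand being bounded by $1$ with $\nu(K_j)<\infty$, dominated convergence lets me pick $n_j>n_{j-1}$ large enough that the integral is at most $2^{-j}$. Performing this selection inductively defines all blocks and the single sequence $\{u_i\}_{i=1}^\infty$.

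The gluing is then immediate: writing $W_j=K_j\setminus\bigcup_{i\in I_j}((u_i,0)+E_i)$ we have $\sum_j\nu(W_j)\le\sum_j2^{-j}<\infty$, so $\nu(\limsup_j W_j)=0$. Thus $\nu$-a.e.\ $x$ lies in $W_j$ for only finitely many $j$; as $x\in K_j$ for all large $j$, such $x$ must lie in $\bigcup_{i\in I_j}((u_i,0)+E_i)$ for every sufficiently large $j$, and since the blocks $I_j$ are disjoint this places $x$ in infinitely many of the sets $(u_i,0)+E_i$, as required. I expect the only genuinely delicate point to be the \emph{uniform} conversion of \eqref{maxP} into the lower bound $\tfrac1{2j}S_j$ on the exponent over all of $K_j$ at once; the degradation of the constant with the size $j$ of the region is harmless, since it is absorbed simply by taking the blocks longer.
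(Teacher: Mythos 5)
Your proposal is correct and follows essentially the same route as the paper's proof: reduce to finite blocks of indices handled by Lemma~\ref{eight} on a bounded region with finite restricted measure, choosing $[A_i,B_i]$ so that $[C_i,D_i]\subseteq[a-B_i,a-A_i]$ there, then use divergence of the tail of the series~\eqref{maxP} together with dominated convergence to push the exponential integral below $2^{-j}$, and finally glue the blocks. The only cosmetic difference is the gluing step, where you invoke Borel--Cantelli while the paper directly observes that $\nu\bigl(\re^2\setminus\bigcup_{i\ge k}((u_i,0)+E_i)\bigr)=0$ for every $k$; these are equivalent.
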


\begin{proof}
(a) Let us first prove that under these assumptions for any bounded interval  $I\subseteq \re^2$ and for any
$\varepsilon >0$ there are $k\in\rn$ and $u_1, u_2, \dots, u_k\in\re$ such that
$$ \nu[I\setminus \bigcup^k_{i=1}((u_i, 0)+E_i)]<\varepsilon \,.$$
Choose $\varepsilon >0$, bounded interval $I=[\alpha , \beta ]\times[\gamma , \delta
]\subseteq \re^2$ and put $A_i=\alpha -D_i$, $B_i=\beta -C_i$, where $\{[C_i,
D_i]\}^\infty_{i=1}$ is a fixed sequence of real intervals for which~(\ref{maxP}) holds.

Then $[a-B_i, a-A_i]\supseteq[C_i, D_i]$ whenever $(a, b)\in I$, and due to~(\ref{maxP}) 
$$\sum^\infty_{i=1}(B_i-A_i)^{-1}\lambda (E^b_i\cap[a-B_i, a-A_i])=\infty$$
for $\nu$-almost every $(a, b)\in I$.

So there exists $k\in\rn$ such that
$$\int_I e^{-\sum^k_{i=1}(B_i-A_i)^{-1}\lambda (E_i^b\cap[a-B_i,
a-A_i])}\,d\nu(a, b)<\varepsilon \,.$$
Using Lemma~\ref{eight} for measure $\nu\llcorner I$ we obtain that there exists
$$(u_1, \dots, u_k)\in\prod^k_{i=1}[A_i, B_i]\text{\ \ such that\ \ }
\nu\left[I\setminus \bigcup^k_{i=1}((u_i, 0)+E_i)\right]<\varepsilon \,.$$
\smallskip
(b) Choose bounded intervals $I_j\subseteq \re^2$, $j=0, 1, 2, \dots$, such that
$I_0\subseteq I_1\subseteq I_2\subseteq\dots$ and $\cup^\infty_{j=0}I_j=\re^2$. Using
part (a) we obtain $k_1\in\rn$ and
$$u_1, u_2, \dots, u_{k_1}\in\re\text{\ \ such that\ \ }
\nu[I_0\setminus \bigcup^{k_1}_{i=1}((u_i, 0)+E_i)]<1\,.$$
Continuing in the same way we obtain by induction for each $j\in\rn$, a finite number
of real numbers $u_{k_j+1}, u_{k_j+2}, \dots, u_{k_{j+1}}$ such that
$$\nu\left[I_j\setminus\bigcup^{k_{j+1}}_{i=k_j+1}((u_i, 0)+E_i)\right]<2^{-j}\,.$$
Obviously $\nu(\re^2\setminus\cup^\infty_{i=k}((u_i, 0)+E_i))=0$ for every $k\in\rn$,
and the proof is complete.
\end{proof}

\begin{lemma}\label{ten} Let $X\subseteq\re$ be a set that is not linearly bounded, hence
$$\sup\{n^{-1}\card(X\cap[-n, n]):n\in\rn\}=\infty\,.$$
Then there are sequences $\{H_i\}^\infty_{i=1}$ and $\{[C_i, D_i]\}^\infty_{i=1}$ of
intervals in $\re$ with the following properties:
\begin{enumerate}
\item [(i)] $\sum^\infty_{i=1}\lambda (H_i)\leq 1$ and
\item [(ii)] $\sum^\infty_{i=1}(1+D_i-C_i)^{-1}\lambda ([L_X(H_i)]^b\cap[C_i,
D_i])=\infty$ for every $b\in[1, 2]$.
\end{enumerate}
\end{lemma}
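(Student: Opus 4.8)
The plan is to unwind the target quantity $\lambda([L_X(H_i)]^b\cap[C_i,D_i])$ into an elementary covering count, and then to feed into it the dense finite pieces of $X$ supplied by the failure of linear boundedness. By Proposition~\ref{first}(ii), for an interval $H=(c,c+\varepsilon)$ one has $[L_X(H)]^b=\{a:(a+bX)\cap H\ne\emptyset\}=\bigcup_{x\in X}(c-bx,\,c+\varepsilon-bx)$, which restricted to a window measures the set of points lying within left-distance $\varepsilon$ of the scaled copy $bX$. The decisive elementary fact is that for a finite block $F=\{\xi_1<\dots<\xi_p\}\subseteq X$ with consecutive gaps $g_m=\xi_{m+1}-\xi_m$, the union of the $p$ length-$\varepsilon$ intervals placed at the points $-b\xi_m$ has measure $\varepsilon+\sum_m\min(\varepsilon,bg_m)$, and since $b\ge1$ this is $\ge\varepsilon+\sum_m\min(\varepsilon,g_m)$ \emph{uniformly} in $b\in[1,2]$. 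Thus it suffices, for each $i$, to produce a finite $F_i\subseteq X$ and an $\varepsilon_i>0$ for which $\sum_m\min(\varepsilon_i,g_m)$ is a fixed proportion of the length of a window $[C_i,D_i]$ containing $\bigcup_{b\in[1,2]}(-bF_i)$, while arranging $\sum_i\varepsilon_i\le1$; this simultaneously gives (i) and (ii).

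Next I would convert non-linear boundedness into a supply of such blocks. Since $\sup_n n^{-1}\card(X\cap[-n,n])=\infty$, I can pick scales $n_j\to\infty$ and densities $K_j\to\infty$ with $\card(X\cap[-n_j,n_j])\ge K_jn_j$, take $H_j=(c_j,c_j+\varepsilon_j)$ with $\varepsilon_j$ decreasing fast enough that $\sum_j\varepsilon_j\le1$, and choose the window $[C_j,D_j]\supseteq\bigcup_{b\in[1,2]}(-b(X\cap[-n_j,n_j]))$ inside $[-2n_j,2n_j]$, so that $1+D_j-C_j\le1+4n_j$. By the first paragraph, (ii) reduces to the single combinatorial estimate $\sum_j(1+4n_j)^{-1}\sum_m\min(\varepsilon_j,g_m^{(j)})=\infty$, the $g_m^{(j)}$ being the consecutive gaps of $X\cap[-n_j,n_j]$. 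Were the $\ge K_jn_j$ points spread out, a resolution $\varepsilon_j$ of order (range)/(point count)$\,\lesssim1/K_j$ would already capture coverage of order $n_j$; pairing this with the window length $\approx4n_j$ makes each summand of order a constant while $\varepsilon_j\to0$, forcing the series to diverge. The content of the lemma is precisely the justification of this coverage estimate without any spreading assumption.

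The main obstacle is this last estimate in the presence of \emph{clustering}: the points of $X\cap[-n_j,n_j]$ may concentrate in tiny subintervals separated by a few long gaps, and then $\sum_m\min(\varepsilon_j,g_m)$ at a small resolution lies far below $n_j$, so a single scale-$n_j$ block carries negligible weight. I expect to resolve this in two complementary ways. First, a location issue must be controlled: the window $\bigcup_{b\in[1,2]}(-bF)$ for a block $F$ lying at distance $A$ from the origin has length of order $A+\mathrm{diam}(F)$, so the useful blocks are exactly those whose distance from $0$ is comparable to their diameter, which is why the count in the hypothesis is centred at $0$. Second, I would select the scales $n_j$ to be scales of \emph{locally maximal centred density} $u\mapsto u^{-1}\card(X\cap[-u,u])$ by a stopping-time argument: at such a scale the density cannot be increased by passing to a centred subinterval, which prevents the mass from collapsing into one far tight cluster and forces a fixed proportion of $[-2n_j,2n_j]$ to be covered at resolution $\varepsilon_j$. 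When instead the excess density is produced by many moderately dense clusters spread through $[-n_j,n_j]$, the required divergence comes not from one heavy summand but from accumulating the contributions of all of them, of total order $\card(\text{clusters})$; a direct check (for instance with clusters of $\sim j^4$ packed points placed near positions $\sim j^2$) exhibits the harmonic-type divergence this produces while keeping $\sum_j\varepsilon_j$ finite. Combining the uniform-in-$b$ bound of the first paragraph with this scale selection then yields (ii) for every $b\in[1,2]$, and $\sum_j\lambda(H_j)=\sum_j\varepsilon_j\le1$ gives (i).
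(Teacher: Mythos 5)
Your opening reduction is correct (the uniform-in-$b$ bound $\lambda([L_X(H)]^b\cap[C,D])\ge\varepsilon+\sum_m\min(\varepsilon,g_m)$ is essentially the device the paper also uses, in the cleaner special case of a $\delta$-separated subset, where every gap contributes the full $\varepsilon$), and you correctly identify clustering as the crux. The gap is that your resolution of the clustering difficulty fails. Your scheme commits to \emph{one} interval $H_j$ per scale together with the demand that each individual summand of (ii) be at least a fixed constant $c>0$, and you claim that selecting scales of locally maximal centred density forces the required spreading. This is false. Take $X=\bigcup_m C_m$, where $C_m$ consists of $2^m n_m$ points packed into an interval of length $\eta_m=2^{-n_m}$ with right endpoint $n_m$, and $n_{m+1}=2^{n_m}$ with $n_1$ large. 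Then $X$ is not linearly bounded, and the centred density $u\mapsto u^{-1}\card(X\cap[-u,u])$ attains its successive maxima exactly at $u=n_m$, where all the new mass lies in a single tight cluster at distance $n_m$ from the origin: maximal centred density is perfectly compatible with total collapse, so the stopping-time selection buys nothing. Worse, \emph{no} choice of single intervals and windows meets your per-summand demand for this $X$. For fixed $b$, the part of $[L_X(H)]^b$ contributed by $C_l$ has measure at most $\min(2^l n_l\varepsilon,2\eta_l)+\varepsilon$, while $C_l$ can meet a window of length $L$ only for a set of $b\in[1,2]$ of measure at most $(L+3)/n_l$, because its image sweeps a distance $n_l$ as $b$ runs over $[1,2]$. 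Averaging the demand $\lambda([L_X(H)]^b\cap[C,D])\ge c(1+L)$ over $b\in[1,2]$ therefore yields
$$c\;\le\;3\sum_l\min\Bigl(2^l\varepsilon,\tfrac{2\eta_l}{n_l}\Bigr)+6\varepsilon\,,$$
and the right-hand side tends to $0$ with $\varepsilon$, uniformly in the window. So $\varepsilon$ is bounded below by a positive constant depending only on $c$; since divergence of (ii) under a per-summand lower bound needs infinitely many summands, condition (i) is violated. The obstacle you named is real, but it cannot be removed within the one-interval-per-scale architecture.

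The missing idea, which is the heart of the paper's proof, is \emph{repetition}: spend the length budget $2^{-m}$ of scale $m$ not on one interval but on many identical tiny ones. Fix $n_m$ with $\card(X\cap[-n_m,n_m])\ge 2^m n_m$, choose $2^m n_m$ of these points forming a set $Z_m$ that is $\delta_m$-separated for some $\delta_m\in(0,2^{-m})$, and take $p_m=\lfloor 2^{-m}\delta_m^{-1}\rfloor$ copies $H_i=(0,\delta_m)$, all with the same window $[C_i,D_i]=[-2n_m-1,2n_m+1]$. For every $b\in[1,2]$ the intervals $(0,\delta_m)-bz$, $z\in Z_m$, are pairwise disjoint (since $bZ_m$ is $\delta_m$-separated) and lie in the window, so each single copy satisfies $\lambda([L_X(H_i)]^b\cap[C_i,D_i])\ge 2^m n_m\delta_m$; hence scale $m$ contributes at least $p_m\,2^m n_m\delta_m/(4n_m+3)\ge 2^{-4}$ to (ii) while costing only $p_m\delta_m\le 2^{-m}$ in (i). This makes the per-scale contribution a constant \emph{regardless of clustering}: the tighter the cluster, the smaller $\delta_m$, but then the more copies the same budget buys. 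Note that the lemma's statement permits repeated terms in $\{H_i\}$, and the repetition is not vacuous downstream, because Lemma~\ref{nine} translates each $E_i=L_X(H_i)$ by its own $u_i$. Your single-cluster versus many-clusters dichotomy and the harmonic-accumulation remark do not substitute for this step, as the single-far-cluster example above defeats both.
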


\begin{proof}
For every $m\in\rn$ fix $n_m\in\rn$ such that $\card(X\cap[-n_m, n_m])\geq 2^mn_m$.

 Further choose an $\delta_m\in(0, 2^{-m})$ such that in $X\cap [-n_m, n_m]$
 there exists an $\delta _m$ separated set $Z_m$ with $2^mn_m$ elements.

Put $p_m=[2^{-m}\delta ^{-1}_m]$, $k_0=0$ and $k_m=\sum_{j=1}^mp_j$ for
$m\in\rn$. For $m\in\rn$ and any $i=k_{m-1}+1, k_{m-1}+2, \dots, k_{m-1}+p_m=k_m$
we take $$H_i=(0, \delta _m),\quad E_i=L_X(H_i)\quad\text{and}\quad [C_i,
D_i]=[-2n_m-1, 2n_m+1].$$
It is easy to check that the following hold for every $b\in[1, 2]$:

\medskip

\begin{enumerate}

\item[(a)] $E^b_i\supseteq[L_{Z_m}(H_i)]^b \supseteq (0, \delta _m)-bZ_m$,
 
\medskip

\item[(b)] $\lambda (E^b_i\cap[C_i, D_i])\geq 2^mn_m \delta _m$,

\medskip

\item[(c)] $\sum\{\lambda (H_i):k_{m-1}<i\leq k_m\}=p_m\delta _m\leq 2^{-m}$,

\medskip

\item[(d)] $\sum\{(1+D_i-C_i)^{-1}\lambda (E^b_i\cap[C_i, D_i]):k_{m-1}<i\leq
k_m\}\geq p_m(4n_m+3)^{-1}2^mn_m\delta _m\geq 2^{-4}$.

\medskip
\end{enumerate}

 So properties (i) and (ii) easily follow.
\end{proof}

\begin{proposition}\label{eleven} Let $X\subseteq \re$ be a set that is not  linearly bounded. If $\nu$ is a $\sigma$-finite Borel measure in $\rmr$ then there exists
a set $A\subseteq\rmr$ such that $\nu(\rmr\setminus A)=0$ and $\mu_X(A)=0$.
\end{proposition}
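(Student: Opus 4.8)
The plan is to combine Lemma~\ref{ten} with Lemma~\ref{nine}, after two measure-theoretic reductions that bring their hypotheses into agreement. Recall that by Proposition~\ref{first}(xii) the assertion $\mu_X(A)=0$ means precisely that for every $\varepsilon>0$ there is an open $G\subseteq\re$ with $\lambda(G)<\varepsilon$ and $L_X(G)\supseteq A$; so the goal is to exhibit, for the full-$\nu$-measure set $A$, open sets of arbitrarily small measure whose $L_X$-images contain $A$. \emph{First reduction, to a finite (hence Radon) measure.} Lemma~\ref{nine} is proved for a Radon measure, whereas $\nu$ is merely $\sigma$-finite; but the conclusion $\nu(\rmr\setminus A)=0$ depends on $\nu$ only through its null sets. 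I would therefore write $\rmr=\bigcup_n E_n$ with the $E_n$ disjoint and $\nu(E_n)<\infty$, and pass to $\nu^*=\sum_n 2^{-n}(1+\nu(E_n))^{-1}\nu_n$, where $\nu_n$ is the restriction of $\nu$ to $E_n$. Then $\nu^*$ is a finite Borel measure with exactly the same null sets as $\nu$, and a finite Borel measure on $\re^2$ is Radon; extending it by $0$ to the $x$-axis lets Lemma~\ref{nine} apply. Thus we may assume $\nu$ is finite and Radon.

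\emph{Second reduction, to the strip $\re\times[1,2]$.} The real mismatch is that Lemma~\ref{ten} produces intervals $H_i$, $[C_i,D_i]$ for which its divergence condition holds only for $b\in[1,2]$, while $\nu$ is spread over all $b\neq0$. I would remove this by scale invariance. Partition $\rmr$ into the countably many disjoint strips $\re\times[2^n,2^{n+1})$ and $\re\times(-2^{n+1},-2^n]$, $n\in\mathbb Z$. Since $\mu_X$ is countably subadditive by Proposition~\ref{first}(vii) and $\nu$ is countably additive, it suffices to find in each strip $S$ a subset $A_S\subseteq S$ with $\nu(S\setminus A_S)=0$ and $\mu_X(A_S)=0$; their union then has $\mu_X$-measure $0$ and full $\nu$-measure. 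Each strip is carried onto $\re\times[1,2)$ by a map $(a,b)\mapsto(\beta a,\beta b)$ with $\beta=\pm2^{-n}$, under which $L_X$ is covariant by Proposition~\ref{first}(iii) and $\mu_X$ scales by $|\beta|$ by Proposition~\ref{first}(x), and pushing $\nu$ forward keeps it finite and Radon. So it is enough to treat a finite Radon measure $\nu$ concentrated on $\re\times[1,2]$, solve there, and pull the solution back.

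\emph{Core step.} On the reference strip, Lemma~\ref{ten} supplies $\{H_i\}$, $\{[C_i,D_i]\}$ with $\sum_i\lambda(H_i)\le1$ such that, setting $E_i=L_X(H_i)$, the divergence condition holds for \emph{every} $b\in[1,2]$, hence for $\nu$-almost every $(a,b)$ because $\nu$ is supported there. Lemma~\ref{nine} then yields reals $\{u_i\}$ so that $\nu$-almost every point lies in infinitely many sets $(u_i,0)+E_i$, and $(u_i,0)+E_i=L_X(u_i+H_i)$ by Proposition~\ref{first}(iii). Let $A$ be the set of such points. For each $N$ put $G_N=\bigcup_{i\ge N}(u_i+H_i)$, an open set with $\lambda(G_N)\le\sum_{i\ge N}\lambda(H_i)\to0$. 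Every point of $A$ lies in $L_X(u_i+H_i)\subseteq L_X(G_N)$ for some $i\ge N$ (as $u_i+H_i\subseteq G_N$ and $L_X$ is monotone), so $A\subseteq L_X(G_N)$ and $\mu_X(A)\le\lambda(G_N)\to0$. Hence $\mu_X(A)=0$ while $\nu(S\setminus A)=0$, which after the reductions completes the proof.

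I expect the genuine difficulty to lie entirely in the two reductions rather than in the covering estimate: one must first secure Radon-ness so that Lemmas~\ref{eight}--\ref{nine} are legitimately applicable, and—more importantly—reconcile the ``for every $b\in[1,2]$'' output of Lemma~\ref{ten} with the ``$\nu$-almost every $(a,b)$'' hypothesis of Lemma~\ref{nine} through the strip decomposition and the scaling identities of Proposition~\ref{first}(iii),(x). Once these are in place, the final bound $\mu_X(A)\le\lambda(G_N)$ is routine, being driven solely by $\sum_i\lambda(H_i)<\infty$.
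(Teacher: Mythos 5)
Your proposal is correct and follows essentially the same route as the paper: reduce to a finite (Radon) measure, use Proposition~\ref{first}(x) to localize to the strip $\re\times[1,2]$ via a countable strip decomposition, then combine Lemma~\ref{ten} with Lemma~\ref{nine} and take $A$ to be the limsup of the sets $L_X(u_i+H_i)$, bounding $\mu_X(A)$ by the tails $\sum_{i\ge N}\lambda(H_i)$. The only difference is one of exposition: the paper compresses your two reductions into ``we can obviously assume $\nu$ is finite'' and ``due to (x) of Proposition~\ref{first} it is sufficient to find $A$ with $\nu((\re\times[1,2])\setminus A)=0$,'' which you spell out explicitly.
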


\begin{proof}
We can obviously assume that $\nu$ is finite. Due to (x) of Proposition~\ref{first} it is
sufficient to find $A$ such that $\nu((\re \times [1, 2])\setminus A)=0$ and
$\mu_X(A)=0$. Let $H_i$ and $[C_i, D_i]$ are such as in Lemma~\ref{ten}. Put $E_i=L_X(H_i)$
and apply Lemma~\ref{nine} with a measure $\nu\llcorner(\re \times [1, 2])$. We obtain a
sequence $\{u_i\}^\infty_{i=1}\subseteq\re$ such that $\nu$-almost every $x\in \re \times [1, 2]$ 
is in infinitely many of sets $\{(u_i, 0)+E_i\}^\infty_{i=1}$.

It follows that the set $A=\cap^\infty_{k=1}\cup^\infty_{i=k}L_X(u_i+H_i)$
satisfies $\nu((\re \times [1, 2])\setminus A)=0$.

Further, $\mu_X(A)\leq \sum^\infty_{i=k}\lambda (H_i)$ for any $k\in\rn$, so
$\mu_X(A)=0$.
\end{proof}

\begin{theorem}\label{twelve} Let $X\subseteq \re$ be a set that is not  linearly bounded,  and
$\nu$ be a $\sigma$-finite Borel measure in $\rmr$.

Then for every $\varepsilon >0$ there exists an open set $G\subseteq \re$ with
$\lambda (G)<\varepsilon $ such that $(a+bX)\cap G$ is infinite for $\nu$ almost
every $(a, b)\in\rmr$.
\end{theorem}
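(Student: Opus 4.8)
The plan is to deduce Theorem~\ref{twelve} from Proposition~\ref{eleven} together with the infinite-intersection criterion Proposition~\ref{first}(xiv). The latter tells us that the conclusion we want---an open $G$ of small measure with $(a+bX)\cap G$ infinite for $\nu$-almost every $(a,b)$---is equivalent to finding a set $A\subseteq\rmr$ with $\nu(\rmr\setminus A)=0$ and $\tilde\mu_X(A)=0$. The catch is that Proposition~\ref{eleven} only delivers $\mu_X(A)=0$ on a $\nu$-full set, whereas infinite (rather than merely nonempty) intersections genuinely require the stronger $\tilde\mu_X$ to vanish. Since passing from $X$ to $X\setminus K$ shrinks the available slopes and hence can only increase $\mu$, the gap between $\mu_X$ and $\tilde\mu_X=\sup_K\mu_{X\setminus K}$ must be closed by applying Proposition~\ref{eleven} not to $X$ itself but to all of its cofinite subsets at once.

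First I would reduce to a countable $X$. If $X$ is not linearly bounded, then for each $m$ there is $n_m$ with $\card(X\cap[-n_m,n_m])\geq m\,n_m$; choosing finitely many points of $X\cap[-n_m,n_m]$ that realize this bound and taking the union over $m$ produces a countable $X_1\subseteq X$ that is still not linearly bounded, and in particular infinite. Because $(a+bX_1)\cap G\subseteq(a+bX)\cap G$, it suffices to prove the theorem for $X_1$, so I assume $X=X_1=\{x_i\}_{i=1}^\infty$ and set $X_k=\{x_i\}_{i=k}^\infty$. Removing finitely many points does not affect the counting asymptotics, so every tail $X_k$ is again not linearly bounded.

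Next, for each $k$ I would apply Proposition~\ref{eleven} to the set $X_k$ and the given $\sigma$-finite measure $\nu$, obtaining $A_k\subseteq\rmr$ with $\nu(\rmr\setminus A_k)=0$ and $\mu_{X_k}(A_k)=0$. Put $A=\bigcap_{k=1}^\infty A_k$. Countable subadditivity of $\nu$ gives $\nu(\rmr\setminus A)=0$, while monotonicity of the outer measure $\mu_{X_k}$ yields $\mu_{X_k}(A)=0$ for every $k$. Every finite $K\subseteq X$ lies in some initial segment $\{x_1,\dots,x_j\}$, so $X_{j+1}\subseteq X\setminus K$ and hence $\mu_{X\setminus K}(A)\leq\mu_{X_{j+1}}(A)=0$ by Proposition~\ref{first}(ix). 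Taking the supremum over finite $K$ gives $\tilde\mu_X(A)=0$. Proposition~\ref{first}(xiv) now furnishes, for each $\varepsilon>0$, an open $G$ with $\lambda(G)<\varepsilon$ such that $(a+bX)\cap G$ is infinite for all $(a,b)\in A$, hence for $\nu$-almost every $(a,b)$, which is the claim.

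The main obstacle is conceptual rather than computational: it is precisely the passage from $\mu_X=0$ to $\tilde\mu_X=0$. Proposition~\ref{eleven} as stated handles a single not-linearly-bounded set against a single measure, so the real content lies in recognizing that the cofiniteness stability of the hypothesis lets one run it along the entire tail filtration $\{X_k\}$, and in invoking the countable reduction so that only countably many exceptional $\nu$-null sets $\rmr\setminus A_k$ arise and their union stays null. Once these two points are in place, the argument is a direct assembly of the quoted results, and no further estimates are needed.
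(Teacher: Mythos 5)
Your proposal is correct and follows essentially the same route as the paper's own proof: reduce to a countable non--linearly-bounded subset $X_1$, apply Proposition~\ref{eleven} to each tail $X_k$, intersect the resulting full-measure sets to get $A$ with $\tilde\mu_{X_1}(A)=0$, and conclude via Proposition~\ref{first}(xiv). You merely spell out two steps the paper leaves implicit (the construction of the countable subset and the deduction $\tilde\mu_{X_1}(A)=0$ from $\mu_{X_k}(A)=0$ via Proposition~\ref{first}(ix)), and both are handled correctly.
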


\begin{proof}
 We can choose a countable set $X_1=\{x_i\}^\infty_{i=1}\subset X$ that is not  linearly bounded.
 Obviously, any $X_k=\{x_i\}^\infty_{i=k}$ will possess the property of not  being linearly bounded
 as well. For any $k\in \rn$
 we can find by Proposition~11 a set $A_k$ of full $\nu$ measure such that
 $\mu_{X_k}(A_k)=0$. Then $A=\cap^\infty_{k=1}A_k$ is a set of full $\nu$ measure
 and $\tilde\mu_{X_1}(A)=0$. Hence by Proposition~\ref{first}(xiv) for every $\varepsilon >0$
 there exists an open set $G\subseteq \re$ with $\lambda (G)<\varepsilon $ such that
$(a + bX_1)\cap G$ is infinite for every $(a, b)\in A$.
\end{proof}

There are simple examples of measures $\nu$ for which if we can avoid almost all similar copies of $X$, then we can easily avoid all similar copies of $X$, as the following theorem shows.

\begin{theorem}\label{trinast} Let $X\subseteq \re$ be a set that is not  linearly bounded, and
$B\subseteq\rmr$ be a set of $\sigma$-finite $1$-dimensional Hausdorff measure. Then for every $\varepsilon >0$ there exists an open set $G\subseteq
\re$ with $\lambda (G)<\varepsilon $ such that $(a+bX)$ is infinite for every $(a,
b)\in B$.
\end{theorem}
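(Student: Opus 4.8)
The plan is to reduce the claim to the vanishing of the outer measures $\mu_{X_k}$ on the \emph{whole} set $B$ and then to invoke Proposition~\ref{first}(xiv). Fix a countable subset $X_1=\{x_i\}_{i=1}^\infty\subseteq X$ that is not linearly bounded and put $X_k=\{x_i\}_{i=k}^\infty$; as observed in the proof of Theorem~\ref{twelve}, each $X_k$ again fails to be linearly bounded. I will show $\mu_{X_k}(B)=0$ for every $k$, which forces $\tilde\mu_{X_1}(B)=0$ by Proposition~\ref{first}(ix) and the definition of $\tilde\mu_{X_1}$; Proposition~\ref{first}(xiv) then supplies, for each $\varepsilon>0$, an open $G$ with $\lambda(G)<\varepsilon$ for which $(a+bX_1)\cap G$, and hence $(a+bX)\cap G$, is infinite at every $(a,b)\in B$. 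The quantifier is automatically \emph{every} point of $B$, since the very definition of $\mu_{X_k}(B)=0$ produces, for each $\eta>0$, a set $G$ with $\lambda(G)<\eta$ and $L_{X_k}(G)\supseteq B$ literally.

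Thus the task is the self-contained statement: if $Y\subseteq\re$ is not linearly bounded and $B\subseteq\rmr$ has $\sigma$-finite one-dimensional Hausdorff measure $\mathcal H^1$, then $\mu_Y(B)=0$. First I would dispose of a measurability point: by Borel regularity of $\mathcal H^1$ I may assume $B$ is Borel (enlarge it to a Borel set of the same $\sigma$-finite $\mathcal H^1$; covering the larger set covers the original by monotonicity of $\mu_Y$). Then $\nu:=\mathcal H^1\llcorner B$ is a genuine $\sigma$-finite Borel measure in $\rmr$, and applying Proposition~\ref{eleven} to $Y$ and $\nu$ furnishes a set $A\subseteq\rmr$ with $\mu_Y(A)=0$ and $\nu(\rmr\setminus A)=0$; the latter says precisely $\mathcal H^1(B\setminus A)=0$.

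It remains to control the exceptional remainder $B\setminus A$, and this is the one genuinely new ingredient: the upgrade from \emph{$\nu$-almost every} to \emph{every}. I claim that $\mathcal H^1(B')=0$ already forces $\mu_Y(B')=0$ for any nonempty $Y$, by a crude one-slope cover. Fix $x_0\in Y$ and $\eta>0$, and choose a cover $B'\subseteq\bigcup_j V_j$ with $\sum_j\operatorname{diam}V_j<\eta$. For each (nonempty) $V_j$ pick a point $(a_j,b_j)\in V_j$ and let $G_j$ be the open interval about $a_j+b_jx_0$ of radius $(1+|x_0|)\operatorname{diam}V_j$. For every $(a,b)\in V_j$ we have $|(a+bx_0)-(a_j+b_jx_0)|\le|a-a_j|+|x_0|\,|b-b_j|\le(1+|x_0|)\operatorname{diam}V_j$, so $a+bx_0\in G_j$ and hence $(a,b)\in L_{\{x_0\}}(G_j)\subseteq L_Y(G_j)$. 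Therefore $L_Y(\bigcup_jG_j)\supseteq B'$ while $\lambda(\bigcup_jG_j)\le 2(1+|x_0|)\eta$; letting $\eta\to0$ yields $\mu_Y(B')=0$. Taking $B'=B\setminus A$ and combining with subadditivity of the outer measure $\mu_Y$ (Proposition~\ref{first}(vii)) gives $\mu_Y(B)\le\mu_Y(A)+\mu_Y(B\setminus A)=0$, which proves the self-contained statement and hence the theorem.

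The only real obstacle is conceptual rather than computational. Proposition~\ref{eleven}, and the averaging behind Lemmas~\ref{eight} and~\ref{nine}, inherently delivers an \emph{almost-everywhere} covering, so literally covering every point of $B$ cannot be read off from the measure $\nu$ alone. The hypothesis that $B$ carries only $\sigma$-finite $\mathcal H^1$ is exactly what tames the null remainder: a set of vanishing $\mathcal H^1$ is so thin that it can be swept out by a single slope $x_0$ at negligible $\lambda$-cost, and this is the step that converts ``almost every similar copy is caught'' into ``every similar copy is caught''. The one technical nicety to handle with care beforehand is the Borel reduction that legitimizes feeding $\mathcal H^1\llcorner B$ into Proposition~\ref{eleven}.
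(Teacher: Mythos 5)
Your proof is correct and takes essentially the same route as the paper: apply Proposition~\ref{eleven} to the restriction of $\mathcal H^1$ to $B$, dispose of the null remainder using the fact that $\mathcal H^1$-null sets are $\mu_Y$-null for any nonempty $Y$, and conclude via $\tilde\mu_{X_1}(B)=0$ and Proposition~\ref{first}(xiv). The only difference is one of detail: the paper merely asserts the $\mathcal H^1$-null $\Rightarrow$ $\mu_Y$-null fact, whereas you prove it explicitly with the one-slope cover, and you also spell out the Borel reduction that legitimizes feeding $\mathcal H^1\llcorner B$ into Proposition~\ref{eleven}.
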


\begin{proof}
Let $X_k$ have the same meaning as in proof of Theorem~\ref{twelve}. Using Proposition~11 we
can find a set $A_k$ with $\text H^1(B\setminus A_k)=0$ and $\mu_{X_k}(A_k)=0$. But any $\text H^1$-null set is also
$\mu_Y$-null, whenever $Y\ne\emptyset$. It follows that $\mu_{X_k}(B)=0$ and hence
$\tilde\mu_{X_1}(B)=0$. The proof is complete by Proposition~\ref{first}(xiv).
\end{proof}

\begin{remark}  To prove Theorem~\ref{trinast} for a set $B$ of the form $B=\re\times C$, where
$C\subseteq \re\setminus\{0\}$ is countable, we can relax the assumption that $X$ is not  linearly bounded to a weaker one that $X$ is not uniformly locally finite (i.e., $\sup\{\card(X\cap [u, u+1]):u\in\re\}=\infty\,$).This can be easily seen from the next section where the results about translation copies can be  extended in straightforward way to the case with countably many scales.
\end{remark}

\begin{remark}
 Our Theorem~\ref{twelve} is general and valid for every $\sigma$-finite Borel measure $\nu$ in $\rmr$,
but one can prove even stronger results for $\nu$ being the $2$-dimensional Lebesgue measure, or for more general product measures. The exceptional set of pairs 
$\{(a, b) \in \rmr: (a+bX) \subseteq (\re \setminus G)\}$  (or even $\{(a, b) \in \rmr: (a+bX)\cap G \text{ is finite} \}$)
can be taken to project to a $\lambda$-null set on the $b$-axis, namely,
$$\lambda\{b: \text{there exists  } a\in \re \text{  such that  } (a+bX) \subseteq (\re \setminus G)\} = 0,$$
as it has been shown by Kolountzakis \cite{Kol} in the case of bounded infinite sets~$X$.
\end{remark}

\begin{remark}
It should be pointed out that it is more important  to have such exceptional set that project to a null (or at least small) set in another directions (determined by~$X$). For example, assuming $0 \in \overline X$, if we manage to have an exceptional set that projects to a null set on the $a$-axis, this would imply that $X$ is non-universal. Universality of $X$ is also equivalent to the fact that 
there is a constant $C$ such that for all sets $G \subseteq \re,$
$$\lambda \{a \in [0, 1]: \{a\} \times [1, 2]  \subseteq L_X(G) \} \leq C\lambda(G).$$ 
Indeed, if the ratio $\lambda \{a \in [0, 1]: \{a\} \times [1, 2]  \subseteq L_X(G) \} /\lambda(G)$ can be made
arbitrarily large, then  for any given $\varepsilon>0$ one can construct (using the union of several random translation copies  of one set with that fraction  larger than $3\frac{|\ln (\varepsilon)|}{\varepsilon}$, and applying Lemma 8) a set $H$ with $\lambda(H)<\varepsilon$ and $\lambda \{a \in [0, 1]: \{a\} \times [1, 2]  \subseteq L_X(H) \}>1-\varepsilon.$ Using the fact that $0 \in \overline X$, we can add to $H$ that small set of $a$'s in $[0, 1]$ for which $\{a\} \times [1, 2]$ is not covered by $ L_X(H)$ and 
conclude non-universality of $X$ this way.

If $X$ is bounded and $0 \in \overline X$, universality of $X$ is equivalent to that
there is a constant C such that for all sets $G \subseteq \re,$
$$\lambda \{a \in \re: \{a\} \times [1, 2]  \subseteq L_X(G) \} \leq C   \lambda(G).$$ 
This  geometric measure theory characterization of universality (for $X$ bounded with $0 \in \overline X$) can be reformulated to the following harmonic analysis/function spaces version; namely that 
there is a constant C such that 
$$\int \inf_{1<b<2} \sup_{x \in X^*}  |f(a+bx)| \,da \leq C\int |f(a)|\, da$$
whenever $X^*$ is a finite subset of $X$ and $f$ is a continuous function on $\re$ with compact support.

This has been previously obtained by Bourgain \cite{B}. It should be mentioned that Lemma 1 in his paper suggests that the condition above is equivalent to universality for any bounded set $X \subseteq \re$, without assuming $0 \in \overline X$. It seems to be an omission; to prove that this is still true without assuming $0 \in \overline X$ would need to know that if we add any point (or finitely many) to an universal set, then this new set will be universal as well. But validity of such claim seems to be open at the moment.
\end{remark}

\section{The case of translation copies}

Several authors investigated the problem with similarity replaced by congruence. But if scaling is not allowed, then even for a finite set $X\subseteq \re$ of cardinality at least $2$ we no longer have a proof of existence of 'translation copies of ~$X$' in measurable set ~$C$ of positive measure near any density theorem; we can only hope for 'translations of ~$X$' in such a set if~$C$ is of nearly full measure in some intervals of diameter slightly larger than diameter of ~$X$.
Hence, it is plausible to address the problem of {\it universality (modulo translations) w.r.t.\ the sets with complement of sufficiently small measure}. One can also ask the question of whether for any zero-sequence $X\subseteq \re$ and any Lebesgue measurable set $C \subseteq \re$ of positive measure there exist
a translation of~$X$, $a+X$, such that~$C$ contains all but finitely many elements of $a+X$. (By the Lebesgue density theorem we get easily that for almost every point $a \in C$ the set $(a+X) \cap C$ is infinite.)
These questions turned out to have negative answers. Komj\'ath \cite{Kom} showed that for any given zero-sequence ~$X$ and $\epsilon>0$ there is a set $C \subset [0, 1]$ with $\lambda(C)>1-\epsilon$ possessing the property: if $a \in [0, 1]$, then  $(a+X) \setminus C$ is infinite.

Here we present the complete characterization of 'universal sets $X$' in this setting in full generality, hence without restricting to the bounded sets. It turns out that a set $X\subseteq \re$ is universal (modulo translations) in above mentioned meaning if and only if it is {\it  uniformly locally finite}. 

\medskip

{\bf Uniformly locally finite sets.} Let us call a set $X \subseteq \re$ {\it uniformly locally finite} if
$$\sup\{\card(X\cap [u, u+1]): u\in\re\} < \infty\,,$$
or, equivalently, for some finite constant $M$, $\card(X\cap [u, v]) \leq M|v-u|\,$ for any bounded interval $[u, v]$ with $|v-u|\geq 1$. Clearly, if $X$ is uniformly locally finite then $X \cup K$ is as well for any finite set $K$.
Each uniformly locally finite set is linearly bounded as well, but the opposite implication needn't be true. The results of this section can be easily generalized to more locally compact topological groups endowed with their Haar measure.

\begin{theorem}\label{strnast} Let $X\subseteq \re$. We can observe the following dichotomy concerning universality/non-universality modulo translations w.r.t. the sets with the complement of sufficiently small measure.
\begin{enumerate}
\item [(a)] If $X$ is uniformly locally finite then there is a constant $\varepsilon>0$ such that whenever $G\subseteq \re$ is a Lebesgue measurable set with  $\lambda(G)<\varepsilon$, then $\re \setminus G$ contains plenty of translations of $X$;  namely $\{a \in \re: (a+X) \subseteq (\re \setminus G)\}$ has infinite Lebesgue measure.

\item [(b)] If $X$ is not uniformly locally finite then for every $\varepsilon>0$ there is an open set $G\subseteq \re$ with $\lambda(G)<\varepsilon$ such that $(a+X) \cap G$ is infinite  for each $a \in \re$; equivalently,  its complement $ \re \setminus G$ doesn't contain any translation copy of $X \setminus K$, for any finite set $K$.
\end{enumerate}
\end{theorem}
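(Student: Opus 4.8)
The plan is to work at a single scale (translations only), bound the measure of the set of translations that meet $G$ inside one unit interval, and then tile. The bad set is $\{a\in\re:(a+X)\cap G\ne\emptyset\}=\bigcup_{x\in X}(G-x)$, and for any unit interval $I$ I would estimate its measure by Fubini:
\[
\lambda\Bigl(I\cap\bigcup_{x\in X}(G-x)\Bigr)\le\sum_{x\in X}\lambda\bigl((G-x)\cap I\bigr)=\int_I\card\bigl((a+X)\cap G\bigr)\,da=\int_G\card\bigl(X\cap(g-I)\bigr)\,dg .
\]
Since $g-I$ is a unit interval, uniform local finiteness gives $\card(X\cap(g-I))\le M$, so the bad set meets every unit interval in measure at most $M\lambda(G)$, a bound independent of where $G$ sits. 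Hence with $\varepsilon=1/M$, whenever $\lambda(G)<\varepsilon$ the good set $\{a:(a+X)\subseteq\re\setminus G\}$ has measure at least $1-M\lambda(G)>0$ inside each integer translate of $[0,1]$; summing over all of them shows it has infinite Lebesgue measure. This part is routine; the only point to watch is that it is the \emph{uniform} (not merely local) finiteness that keeps the final estimate independent of the location of $G$.

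\textbf{Part (b).} The key observation is that the translation problem is the restriction of the similarity problem to the single line $b=1$, so I would reuse Lemma~\ref{eight} and Lemma~\ref{nine} with a measure $\nu$ concentrated on $\re\times\{1\}$ (one-dimensional Lebesgue measure along that line, which is Radon). I cannot invoke Proposition~\ref{eleven} or Theorem~\ref{twelve} directly, since their hypothesis (not linearly bounded) is stronger than ours; the gain from confining attention to one line is precisely that the divergence condition~(\ref{maxP}) need only hold at $b=1$, and this can be arranged from the weaker assumption that $X$ fails to be uniformly locally finite.

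The new ingredient is an analogue of Lemma~\ref{ten} adapted to unit intervals. Since $\sup_u\card(X\cap[u,u+1])=\infty$, for every $m$ I fix a cluster $Y_m\subseteq X$ of $p_m$ points inside some unit interval $[c_m,c_m+1]$, with $p_m\to\infty$. Choosing $\delta_m$ below the minimal gap of $Y_m$, I set $H_i=(0,\delta_m)$ and $[C_i,D_i]=[-c_m-1,\,-c_m+1]$ on the block of indices assigned to $m$. At $b=1$ one then has $[L_X(H_i)]^1\supseteq\bigcup_{y\in Y_m}(-y,\delta_m-y)$, a disjoint union contained in $[C_i,D_i]$, so $\lambda([L_X(H_i)]^1\cap[C_i,D_i])\ge p_m\delta_m$ while $1+D_i-C_i=3$. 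Taking about $\lfloor 2^{-m}\varepsilon/\delta_m\rfloor$ copies in the $m$-th block keeps $\sum_i\lambda(H_i)<\varepsilon$ yet makes that block contribute roughly $2^{-m}\varepsilon\,p_m$ to the sum in~(\ref{maxP}); choosing the clusters with $p_m$ growing fast (possible, as $p_m$ may be taken as large as we like) forces the sum to diverge at $b=1$, hence $\nu$-almost everywhere. Lemma~\ref{nine} then yields translations $u_i$ such that, with $G=\bigcup_i(u_i+H_i)$ and $(u_i,0)+L_X(H_i)=L_X(u_i+H_i)$ from Proposition~\ref{first}(iii), the copy $a+X$ meets the open set $G$ infinitely often for Lebesgue-almost every $a$, with $\lambda(G)<\varepsilon$. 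To pass from ``almost every'' to ``every'' I would argue as in the passage from Proposition~\ref{eleven} to Theorems~\ref{twelve} and~\ref{trinast}: take a countable $X_1=\{x_i\}\subseteq X$ still not uniformly locally finite, put $X_k=\{x_i\}_{i\ge k}$, and for each $k$ build an open $G_k'$ of measure $<2^{-k-1}\varepsilon$ with $(a+X_k)\cap G_k'\ne\emptyset$ off a Lebesgue-null set $Z_k$ of exceptional $a$'s. Covering $Z_k+x_k$ (one chosen point $x_k\in X_k$ suffices for nonemptiness) by an open $O_k$ of measure $<2^{-k-1}\varepsilon$ and setting $G_k=G_k'\cup O_k$ makes $(a+X_k)\cap G_k\ne\emptyset$ for \emph{every} $a$. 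Then $G=\bigcup_k G_k$ is open with $\lambda(G)<\varepsilon$, each level contributes a point $a+x$ with $x\in X_k$, and since the indices are unbounded, $(a+X)\cap G$ is infinite for every $a$; this is the assertion and gives the reformulation about $X\setminus K$.

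The step I expect to be the main obstacle is the unit-interval version of Lemma~\ref{ten}: one must ensure that the per-block contribution to~(\ref{maxP}) is governed by the number of points of $X$ in a \emph{unit} interval, so that the weak hypothesis ``not uniformly locally finite'' suffices, rather than by the linear-growth quantity $n^{-1}\card(X\cap[-n,n])$ that drives the similarity case. The disjointness trick (taking $\delta_m$ below the minimal gap) and the bookkeeping that keeps $\sum_i\lambda(H_i)$ bounded while the weighted sum diverges are exactly where the distinction between uniform local finiteness and linear boundedness is used.
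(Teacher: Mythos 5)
Your proposal is correct in both parts. Part (a) is in substance the paper's own argument: the paper gets the bound $\lambda((G-X)\cap[u,v])<\lambda(G)\,M(v-u+\tfrac1M)$ by summing over the components of an open $G$, while you get the per-unit-interval bound $M\lambda(G)$ by Fubini; both rest on the same counting consequence of uniform local finiteness and both conclude by tiling $\re$ with unit intervals.

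Part (b) is where you take a genuinely different route --- and it is exactly the route the paper sketches in the opening paragraph of its proof of (b) ("we could prove an analogue of Lemma~\ref{ten} \dots with $b=1$ only") and then deliberately bypasses. You carry that sketch out: clusters $Y_m\subseteq X$ of $p_m$ points in unit intervals with $p_m$ growing geometrically, $\delta_m$ below the minimal gap, blocks of $\lfloor 2^{-m}\varepsilon/\delta_m\rfloor$ copies to make the series in~(\ref{maxP}) diverge at $b=1$ while $\sum_i\lambda(H_i)<\varepsilon$, then Lemma~\ref{nine} with $\nu$ the $1$-dimensional Hausdorff measure on $\re\times\{1\}$, and finally a tail-plus-patching argument ($X_k$, null sets $Z_k$, small open covers of $Z_k+x_k$) to upgrade "almost every $a$" to "every $a$" and to force infinitude; this patching is in substance the same trick the paper uses in Theorem~\ref{trinast} ($\mathrm H^1$-null sets are $\mu_Y$-null). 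The paper's actual proof reduces further instead: it suffices to find $G$ with $\lambda(G)<\varepsilon$ such that $G-X$ covers a $(1-\varepsilon)$-fraction of $[0,1]$, because adding to $G$ a translate of the leftover set $[0,1]\setminus(G-X)$ gives $H$ with $\lambda(H)<2\varepsilon$ and $H-X\supseteq[0,1]$, i.e.\ $\mu_X\equiv0$ on $\re\times\{1\}$ exactly, with no exceptional set left to patch; this approximate covering is achieved by a \emph{single} application of Lemma~\ref{eight}, using one cluster of $n>3|\ln\varepsilon|/\varepsilon$ points that is $\delta$-separated in a unit interval ($\delta<\varepsilon/8$) and $k\approx\varepsilon/\delta$ translates of $(0,\delta)$, leaving uncovered measure at most $e^{-kn\delta/(2+\delta)}<\varepsilon$. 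What the paper's route buys is brevity and a quantitative flavor (one cluster of size about $|\ln\varepsilon|/\varepsilon$ suffices, matching the theme of Section~\ref{sec-five}); what yours buys is a verbatim reuse of the Lemma~\ref{ten}--Proposition~\ref{eleven}--Theorem~\ref{twelve}/\ref{trinast} machinery, at the cost of clusters of unbounded size $p_m\geq 2^m$ and the extra patching step. One caution about wording only: Lemma~\ref{nine} gives that $a+X_k$ meets infinitely many of the sets $u_i+H_i$, which does not by itself make $(a+X_k)\cap G_k'$ infinite (one point can lie in many $u_i+H_i$); your final argument is unaffected, since it uses only nonemptiness at each level $k$ and derives infinitude from the distinct tails $X_k$.
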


\begin{proof}
(a) Let $X$ be uniformly locally finite, and $M$ be a constant such that $\card(X\cap [u, v]) \leq 
M|v-u|\,$ whenever $[u, v]$ is a bounded interval with $v-u$ sufficiently large. We will show that if $G\subseteq \re$ is a Lebesgue measurable set with  $\lambda(G)<\frac{1}{M}$, then the set $\{a \in \re: (a+X) \subseteq (\re \setminus G)\}$, which is exactly $\{a \in \re: a \notin G-X\}$,
contains a significant fraction of each  sufficiently large interval. We consider a set $G$ with $\lambda(G)<\frac{1}{M}$ and will estimate from above the measure $\lambda((G-X) \cap [u, v])$.
(We can assume that $G$ is open; otherwise we take its open superset of slightly larger measure.) Consider all components $G_i=(d_i, d_i+\varepsilon_i), i=1, 2, \dots, $ of the set $G.$ The set of those $x\in X$ for which the set $G_i-x=(d_i-x, d_i-x+\varepsilon_i)$ intersects  $[u, v]$ is exactly
 $X\cap (d_i-v, d_i+\varepsilon_i-u),$ and its cardinality is at most $M(v-u+\varepsilon_i)<M(v-u+\frac{1}{M})$. Consequently, $\lambda((G_i-X) \cap [u, v])<\lambda(G_i)M(v-u+\frac{1}{M})$, and after summing over all components, $\lambda((G-X) \cap [u, v])<\lambda(G)M(v-u+\frac{1}{M})$. Now, if we divide this inequality by $v-u$ and assume that $v-u$ is sufficiently large, then this fraction is less than $1-\tau$, as $\lambda(G)M<1-\tau$ for some $\tau>0$, by our assumptions about how small the measure of $G$ is. Hence $\lambda\{a \in [u, v]: (a+X) \subseteq (\re \setminus G)\}=\lambda\{a \in [u, v]: a \notin G-X\}>\tau|v-u|,$ that completes the proof of part (a).
 
 \medskip
 
(b) We could prove first an analogue of Lemma~10 under the weaker assumption of X not being uniformly locally finite that would apply specifically to translations; where the part (ii) of Lemma~10 is required to be fulfilled with $b=1$ only. This way we could prove an analogue of Proposition~\ref{eleven} for measures in $\re \times \{1\}$ and to complete the proof as that of Theorem~\ref{trinast}, showing that $\mu_X$ and $\tilde\mu_X$ vanish on $\re \times \{1\}$.

But the  problem to prove that  for $X$ that is not uniformly locally finite  $\mu_X$ vanishes on  $[0, 1] \times \{1\}$ (and, consequently, on $\re \times \{1\}$) can be further reduced as follows. One need to prove that for every $\varepsilon>0$  there is a set $H\subseteq \re$ with $\lambda(H)< 2\varepsilon$ such that $L_X(H)$ covers $[0, 1] \times \{1\}$ or, equivalently, that $H-X$ covers $[0, 1]$. But for that it suffices if for some $G$ with $\lambda(G)< \varepsilon$ the set  $G-X$ covers a significant fraction of $[0, 1]$, namely that $\lambda((G-X) \cap [0,1])>1-\varepsilon.$ Adding to $G$ a suitable translation copy of that yet uncovered set $[0,1] \setminus (G-X)$ will then produce the set $H$ with $\lambda(H)< 2\varepsilon$ and $H-X$ covering $[0, 1],$ as needed. So we reduced the problem to the following equivalent one:

{\bf Approximate covering problem.} Given $X\subseteq \re$ that is not uniformly locally finite and $\varepsilon>0$, show that there is a set $G\subseteq \re$ with $\lambda(G)<\varepsilon$ such that $\lambda((G-X) \cap [0,1])>1-\varepsilon$. 

Given such $X$ and $\varepsilon \in (0, 1)$, we choose an integer $n>3\frac{|\ln (\varepsilon)|}{\varepsilon}$.
Then we choose an interval $[u, u+1]$ such that  $\card(X\cap [u, u+1]) \geq n$, and then a set $Y \subset X\cap [u, u+1]$ of cardinality $n.$ Now we choose a number $\delta \in (0, \frac{\varepsilon}{8})$ for which $Y$ is $\delta$-separated, and put $I=(0, \delta)$. We apply Lemma~\ref{eight}, with $\nu$ being $1$-dimensional Hausdorff measure on $[0, 1] \times  \{1\}$, to several translations of a single set $L_Y(I)$.
(We will simplify the notation identifying $\re \times  \{1\}$ with $\re$ and using the Lebesgue measure.)
To follow closely notation used in Lemma~\ref{eight} let's denote an interval $[A, B]=[u-\delta, u+2]$ and  choose for  $k$ the largest integer for which $k \delta < \varepsilon$, and let $U=\prod^k_{i=1} [A, B].$ Observe that $[L_Y(I)]^1=I-Y$ and that, for every
$a \in [0, 1],$ $\lambda((I-Y)\cap[a-B, a-A])=n \delta,$ and $B-A=2+\delta$. So using Lemma~\ref{eight} we get that there exists $u=(u_1, u_2, \dots, u_k)\in U$ such that
$$\lambda \left([0, 1] \setminus \cup^k_{i=1} (u_i+(I-Y))\right)  \leq e^{-k\frac{n \delta}{2+\delta}}
< e^{-\frac{n \varepsilon}{3}} < \varepsilon,$$
where we used our choice of $\delta,$ $k$ and $n$ in last two inequalities. Hence the set $G=\cup^k_{i=1} (u_i+I)$ satisfies $\lambda(G) \leq k \delta < \varepsilon$ and $\lambda \left([0, 1] \setminus (G-X)\right)\leq \lambda \left([0, 1] \setminus (G-Y)\right) < \varepsilon,$ so $G$ solves our approximate covering problem, that completes the proof.
\end{proof}

\section{Non-universality of sets with large relatively $\delta$-separated subsets}\label{sec-five}

In this section we will give a probabilistic proof of non-universality of sets containing large relatively $\delta$-separated subsets. This will improve on the previous results of Kolountzakis \cite{Kol},
who proved non-universality of any bounded infinite set $X \subseteq \re$ which contains, for arbitrarily large $n$, a subset $\{x_1, x_2, \dots, x_n \}$ with $x_1>x_2> \dots  > x_n>0$ and
$$\min_{i=1,\dots,n-1} \frac{x_i-x_{i+1}}{x_1} \geq (1-o(1))^n.$$
In another words,
$-\log (\min_{i=1,\dots,n-1} \frac{x_i-x_{i+1}}{x_1}) = o(n)$, as $n \to +\infty$.
Despite the fact that a scale-invariant way has been used how to measure that minimum gap is large,
Kolountzakis'  sufficient condition of non-universality doesn't reflect enough that the problem is also translation invariant. We will improve on what we believe was suboptimal in the result above. The way how
the minimum gap $\min_{i=1,\dots,n-1} (x_i-x_{i+1})$ will be measured in our result is relative to $x_1-x_n$, rather than relative to $x_1$ in Kolountzakis' result. In fact, there are many zero-sequences in our Examples~\ref{five} that are arbitrarily relatively fine, and whose non-universality follows from our deterministic results given above, but where Kolountzakis' criterion does not apply. Our criterion rectifies this issue, and we show non-universality of any bounded infinite set $X \subseteq \re$ which contains, for arbitrarily large $n$, a subset $\{x_1, x_2, \dots, x_n \}$ with $x_1>x_2> \dots  > x_n$ and
$$\min_{i=1,\dots,n-1} \frac{x_i-x_{i+1}}{x_1-x_n} \geq (1-o(1))^n.$$


All previously published results on the Erd\"os similarity problem, except Bourgain's proof of non-universality of triple sum of infinite sets \cite{B}, follow easily from our theorem below.

Our sufficient condition that implies non-universality of a bounded infinite set  $X\subseteq \re$ can be expressed in terms of sizes of relatively $\delta$-separated subsets of $X$. If we denote, for $n>=3$,
$$\delta_n(X) = \sup \{\delta>0:  \text{ there is a relatively } \delta \text{-separated set } X^*  \subseteq X \text{ with cardinality }|X^*|=n \},$$
then that sufficient condition reads as follows: 
$\lim\inf_{n \to +\infty} \frac{-\log \delta_n(X)}{n} =0.$\\
And if we denote, for $\delta \in (0, 1),$ 
$$N_\delta(X)= \max \{ |X^*|:  X^*  \subseteq X \text{ is a relatively }
\delta \text{-separated set } \},$$
the condition reads as follows:\   
$\lim\sup_{\delta \to 0+} \frac{N_\delta(X)}{-\log \delta} =\infty.$
Consequently, any bounded universal set $X\subseteq \re$ has to satisfy $N_\delta(X) \leq C |\log \delta|$ for any sufficiently small $\delta>0$. From this one can easily derive that all universal sets have zero dimension (Hausdorff, or Minkowski). More precisely, their Hausdorff $H^h$ (or packing $P^h$) gauge measure is locally finite for the measure which is defined using gauge function $h(\delta)=\frac{1}{|\log \delta|}$, if $\delta>0$ is small.

\begin{theorem}\label{fifteen}
 Let  $X\subseteq \re$ be a bounded infinite set which contains, for arbitrarily small $\sigma>0$, a subset $\{x_1, x_2, \dots, x_n \}$ with $x_1>x_2> \dots  > x_n$,  $n \geq 3$, and
$$\min_{i=1,\dots,n-1} \frac{x_i-x_{i+1}}{x_1-x_n} \geq (1-\sigma)^n.$$
Then $X$ is non-universal (modulo similarities) w.r.t.\ the sets with complement of finite Lebesgue measure.  For every $\varepsilon >0$ there exists an open set $G\subseteq \re$ with
$\lambda (G)<\varepsilon $ such that any similar copy of $X$ intersects $G$ in an
infinite set. In another words, the set $ \re \setminus G$ doesn't contain any similar copy of $X \setminus K$, for any finite set~$K$. 
\end{theorem}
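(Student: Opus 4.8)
The plan is to follow the same architecture used for Theorem~\ref{four}: reduce the full covering statement to showing $\mu_X([0,1]\times[1,2])=0$ for a suitable positive copy of the pattern, upgrade this to $\tilde\mu_{X_1}\equiv 0$ on a countable subsequence via Proposition~\ref{first}(xiv), and then read off the $\varepsilon$-covering statement. The genuinely new content lies in the deterministic--versus--probabilistic mechanism producing small $G$: whereas Lemma~\ref{three} exploited a \emph{fine} pattern directly, here the hypothesis only gives relatively $\delta$-separated finite subsets $Y=\{x_1>\dots>x_n\}$ with $\delta\ge(1-\sigma)^n$ (relative to $x_1-x_n$), which is far weaker. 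So the core of the proof must be a probabilistic covering lemma in the spirit of Lemma~\ref{eight}, applied to randomly translated copies of a single bush $L_Y(I)$ for a short interval $I=(0,\eta)$.

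First I would normalise by scaling and translating so that $x_1-x_n=1$ and the pattern sits in $[0,\infty)$; by Proposition~\ref{first}(x),(xi) this costs nothing. The goal is then, for given target $\varepsilon>0$, to cover $[0,1]\times[1,2]$ (up to arbitrarily small leftover, which is then patched as in the translation-copies argument of Theorem~\ref{strnast}(b)) by $L_Y(G)$ with $\lambda(G)$ small. Fix a short $I=(0,\eta)$; for each fixed $b\in[1,2]$, the slice $[L_Y(I)]^b$ is a union of $n$ intervals of width $\eta$ centred near the points $bx_j$, and because $Y$ is relatively $\delta$-separated these centres are $\delta$-spaced, so on a window of length comparable to $b(x_1-x_n)\asymp 1$ the slice has measure $n\eta$. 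The plan is to apply Lemma~\ref{eight} to $k$ independent translates with $E_i=L_Y(I)$ and the translation box $U=\prod_{i=1}^k[A,B]$ chosen with $B-A\asymp 1$, giving the mean-value bound
\[
\nu\Big(\big([0,1]\times[1,2]\big)\setminus\bigcup_{i=1}^k\big((u_i,0)+E_i\big)\Big)\le \int e^{-k(B-A)^{-1}\,n\eta}\,d\nu
\]
for $\nu$ the relevant product (or Lebesgue) measure on $[0,1]\times[1,2]$; this forces the exponent $k\,n\eta/(B-A)$ to be large while keeping $\lambda(G)\le k\eta$ small.

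The arithmetic of parameters is where the separation hypothesis is used and is the crux. I would set $\eta\asymp\delta$ (so that the $n$ slice-intervals of width $\eta$ remain essentially disjoint, using $\delta$-separation) and then demand the exponent satisfy $k\,n\delta\gtrsim|\ln\varepsilon|$ while $\lambda(G)\approx k\delta<\varepsilon$. Eliminating $k\delta$, these two requirements are compatible precisely when $n\gtrsim|\ln\varepsilon|/\varepsilon$, i.e.\ when $n$ is large relative to $|\ln\varepsilon|$; the separation quality $\delta\ge(1-\sigma)^n$ guarantees we may take $\eta$ exponentially small without destroying disjointness, and the hypothesis that such $n$ exist for arbitrarily small $\sigma$ is exactly what lets $n\to\infty$ faster than $|\ln\delta|=n|\ln(1-\sigma)|\approx n\sigma$ grows, since $\sigma\to0$. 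Thus the condition $\liminf_n \tfrac{-\log\delta_n(X)}{n}=0$ is what makes the exponent in the bound tend to infinity while $\lambda(G)\to0$.

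The main obstacle I expect is controlling the near-disjointness of the $n$ bush-slices uniformly in $b\in[1,2]$: the centres $bx_j$ drift with $b$, so I must verify that over the whole $b$-range the spacing stays at least a fixed fraction of $\delta$ and that the slice measure $n\eta$ is not eroded by overlaps at the window's ends; a clean way is to work on $[0,1]\times[1,2]$ with $\nu$ a product measure and to bound each $b$-slice separately, then integrate. After obtaining the approximate cover of $[0,1]\times[1,2]$ with $\lambda(G)<\varepsilon$, I would patch the small uncovered remainder exactly as in Theorem~\ref{strnast}(b) (adding a translate that covers the leftover, doubling the measure budget to $2\varepsilon$), conclude $\mu_{X'}([0,1]\times[1,2])=0$ for the normalised positive pattern, hence $\mu_X\equiv 0$ by Theorem~\ref{two}. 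Finally, passing to a countable subsequence $X_1\subseteq X$ retaining the hypothesis and noting each $X_k=\{x_i\}_{i\ge k}$ retains it (the condition is a $\liminf$, stable under removing finitely many points), I get $\tilde\mu_{X_1}\equiv0$ and apply Proposition~\ref{first}(xiv) to produce, for each $\varepsilon>0$, an open $G$ with $\lambda(G)<\varepsilon$ meeting every similar copy of $X$ in an infinite set, which is the stated conclusion.
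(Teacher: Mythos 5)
Your argument reproduces the ``almost-everywhere covering'' machinery correctly, but it breaks at the one step you delegate to an analogy: the patch from covering \emph{almost all} of $[0,1]\times[1,2]$ to covering \emph{all} of it. In Theorem~\ref{strnast}(b) the patch works because the uncovered remainder lives in the same space as $G$: it is a subset of $[0,1]\subseteq\re$ of small length, so adding one translated copy of it to $G$ costs exactly its length. Here the remainder $R\subseteq[0,1]\times[1,2]$ is a \emph{plane} set, and what Lemma~\ref{eight} (with $\nu$ a product/Lebesgue measure fixed in advance) controls is only $\nu(R)$. That is not a kind of smallness you can patch: to absorb $R$ you need $\mu_Y(R)$ small, i.e.\ a one-dimensional set $W$ with $L_Y(W)\supseteq R$ and $\lambda(W)$ small, and small $\lambda^2(R)$ does not give this --- $R$ may well contain entire horizontal segments $[0,1]\times\{b_0\}$, which are $\lambda^2$-null but have $\mu_Y$-measure at least $1/n$ each, and in general estimating $\mu_Y(R)$ is the original covering problem all over again. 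A telltale sign that something is wrong: your mechanism never genuinely uses the hypothesis $\delta\ge(1-\sigma)^n$. With $\eta\le\delta$ the exponent in Lemma~\ref{eight} is $kn\eta/(B-A)=n\lambda(G)/(B-A)$, which is independent of $\delta$; since \emph{every} $n$-point subset of a bounded infinite set is relatively $\delta$-separated for some $\delta>0$, your argument as written would apply to every bounded infinite set and would thus solve the Erd\"os problem outright. Indeed the a.e.\ statement you prove is true for all such sets (this is the content of Section~3 and the Kolountzakis remark after Theorem~\ref{twelve}); the separation hypothesis is needed precisely for the step you hand-waved.

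The paper's proof is engineered to avoid exactly this trap. It never bounds the plane measure of the uncovered set; it bounds the expected \emph{length of its projection on the $a$-axis}, namely $\mathbf{E}\int_A^B\phi_G(a)\,da$ where $\phi_G(a)=1$ iff there exists $b\in[1,2]$ with $(a,b)\notin L_Y(G)$. To control $\Pr[\exists\,b]$ for a fixed $a$, it takes $G$ to be a union of \emph{independent grid cells} $[j\tau,(j+1)\tau)$ (not random translates of one bush, as in Lemma~\ref{eight}), so that coverage is piecewise constant in $b$ and only a finite set $S(a)$ of critical slopes needs checking, with $|S(a)|\le 4n/\delta$; a union bound then gives $\Pr[\exists\,b]\le 4n(1-q)^n/\delta$, and beating the factor $1/\delta$ by $(1-q)^n$ is exactly where $-\log\delta=o(n)$ is indispensable. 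Finally, the pattern is translated so that $y_n=0$; then the bad $a$'s can simply be added to $G$, because $0\in Y$ makes $L_Y(\{a\})$ contain the whole vertical line through $a$, so the patch costs only the (small) length of the bad projection. Your outer frame --- reduce to $\mu_X\equiv0$ via Theorem~\ref{two}, pass to $X\setminus K$ and $\tilde\mu_{X_1}$, finish with Proposition~\ref{first}(xiv) as in Theorem~\ref{four} --- matches the paper; the missing ideas are the projection bound with the union bound over critical $b$'s, and the $0\in Y$ vertical-line patch.
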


\begin{proof} We will show that the assumptions made about $X$ imply that $\mu_X\equiv0$ on $[0, 1] \times [1, 2]$ (and, consequently, $\mu_X\equiv0$ on $\re \times (\re\setminus\{0\})$), the same conclusion then follow with $X \setminus K$ in place of $X$, for any finite set $K$. That will complete the proof similarly as in the proof of Theorem~\ref{four}.\\
We will aim to prove that for every $\varepsilon>0$  there is a set $H\subseteq \re$ with $\lambda(H)< 2\varepsilon$ such that $L_X(H)$ covers $[0, 1] \times [1, 2]$. Keeping such an $\varepsilon>0$ fixed,
we choose a subset $X^*$ of $X$  as follows. First, fix a constant $C(X) =\sup\{|x|: x \in X \}$. For a positive constant  $q=\frac{\varepsilon}{3(1+4C(X))}$ we choose a subset $X^*=\{x_1, x_2, \dots, x_n \}$ of $X$ with $x_1>x_2> \dots  > x_n$,  $n \geq 3$, such that for $\delta$ defined by
$$\delta=\min_{i=1,\dots,n-1} \frac{x_i-x_{i+1}}{x_1-x_n},$$
(which by our assumptions can satisfy $-\log \delta = o(n)$ with $n \to \infty$) we have
$$\frac{- \log \delta}{n}+\frac{ \log n}{n}-\frac{\log \varepsilon}{n}+\frac{\log [12(1+C(X))]}{n}
\leq \frac{\varepsilon}{3(1+4C(X))}.$$ 
(Such choice is certainly possible, as the left hand side tends to $0$ with $n \to \infty$, and the right hand side is positive.)

We will see that $L_{X^*}(H)$  with a properly chosen set $H\subseteq \re$ for which $\lambda(H)< 2\varepsilon$ can cover $[0, 1] \times [1, 2]$. It is useful to study covering using a translation copy of this set, $Y=\{y_1, y_2, \dots, y_n \}=X^*-x_n$, with $y_i=x_i-x_n$ for $i=1,2,\dots,n$. Clearly, $y_1>y_2> \dots  > y_n=0.$\\
As it follows from Proposition 1(v), $L_{X^*}(H)$ will cover $[0, 1] \times [1, 2]$ iff $L_Y(H)$ will cover the parallelogram $\psi_{x_n}([0, 1] \times [1, 2]).$ This parallelogram is contained in the (smallest) rectangle $[A, B] \times [1, 2]$, which is either $[0, 1+x_n] \times [1, 2]$ if $x_n \geq 0$, or $[x_n, 1] \times [1, 2]$ if $x_n \leq 0$.

 We aim to cover the rectangle $[A, B] \times [1, 2]$ by $L_Y(H)$ using a set $H$ with $\lambda(H)< 2\varepsilon.$ We will find first a smaller set $G$ such that $L_Y(G)$  fully covers nearly all horizontal segments of $[A, B] \times [1, 2]$, and then we will construct $H$ by adding a small set to that $G$ in such a way that $L_Y(H)$ covers $[A, B] \times [1, 2]$.
 
Observe now that $L_Y(\{z\})$ intersects this rectangle iff $z \in [A, B+2y_1]$.

Denote $\tau= \min_{i=1,\dots,n-1} (x_i-x_{i+1})= \min_{i=1,\dots,n-1}( y_i-y_{i+1})$ and $\delta=\tau/(x_1-x_n)=\tau/y_1$.  We will use discretized version of that interval $[A, B+2y_1]$, namely smallest semiopen interval of the form $[m \tau, M\tau)$
with integers $m, M$ that contains $[A, B+2y_1]$. Observe that the length of the interval $[A, B]$ is at most $1+C(X)$, and the length of $[m \tau, M \tau)$ is at most $1+4C(X)$.\\
Consider intervals $I_j=[j\tau, (j+1)\tau), j=m,m+1,\dots,M-1$, and put each $I_j$ in $G$ independently of the other intervals with probability $q$ which will be determined later. In other words
$$ {\bf 1}_G=\sum _{j=m}^{M-1}  Z_j{\bf 1}_{[j\tau, (j+1)\tau)},$$
where $Z_j \in \{0, 1\}$ are independent indicator random variables with a fixed expected value ${\bf E}Z_j=q$. 
For a given $G$ we write for $a \in [A, B]$,
$$\phi_G(a)={\bf 1}_{[\exists\\  b \in [1, 2] \text{   such that  }  \\  (a+bY) \subseteq (\re \setminus G)]}
={\bf 1}_{[\exists\\  b \in [1, 2] \text{   such that  }  \\ (a, b) \notin L_Y(G)]}$$

We have ${\bf E} \lambda(G)=(M-m)\tau q$, and
$${\bf E}  \int_A^B \phi_G(a) da=\int_A^B {\bf Pr  }[\exists\\  b \in [1, 2] \text{   such that  }  \\  (a+bY) \subseteq (\re \setminus G)]    da.$$

In what follows we will prove that with properly chosen probability $q$ we can achieve that both expectations, ${\bf E} \lambda(G)$ and ${\bf E}  \int_A^B \phi_G(a) da$ are $\leq \varepsilon/3$.
From that one can then deduce, using standard large deviations arguments, that there exists a set $G$
for which $\lambda(G)<\varepsilon$ and  $\int_A^B \phi_G(a) da<\varepsilon.$\\
Now it is obvious how to construct a set $H$ with $\lambda(H)< 2\varepsilon$ for which $L_Y(H)$ covers $[A, B] \times [1, 2]$; we may add to a set $G$ that small exceptional set 
$\{a \in [A, B]: \exists  b \in [1, 2] \text{   such that  }   (a, b) \notin L_Y(G)\}.$\\

To complete the proof, we need to justify that both, ${\bf E} \lambda(G)$ and ${\bf E}  \int_A^B \phi_G(a) da$, can be made $\leq \varepsilon/3$ with proper choice of $q$. As ${\bf E} \lambda(G)=(M-m)\tau q \leq (1+4C(x))q$, we can take $q=\frac{\varepsilon}{3(1+4C(X))}$ to ensure that ${\bf E} \lambda(G) \leq \varepsilon/3$. Now we will estimate from above the value of ${\bf E}  \int_A^B \phi_G(a) da$.
\\

Fix $a \in [A, B].$ To check whether there exists $b \in [1, 2]$ such that $(a+bY) \subseteq (\re \setminus G)$ (equivalently, such that $(a, b)  \notin L_Y(G)$) it is sufficient to check whether such $b$ exists in a finite set $S(a)=\{b_1=1, b_2,\dots,b_N\}$ consisting of $1$ and those $b \in (1, 2]$ for which some $a+by_i, i=1,2,\dots,n-1$ is in the set $\{j\tau: m<j<M \}$ (as only those points $(a,b)$  can be boundary points of
the set $L_Y(G)$). For each $ i=1,2,\dots,n-1$ we have (approximately) $y_i/\tau$ such points
in $b \in (1, 2]$ for which $a+by_i$ corresponds to some $j\tau$ from the interval $(a+y_i, a+2y_i]$.
For any $a \in [A, B]$ such a  set $S(a)$ consists of at most $|S(a)| \leq 4ny_1/\tau=4n/\delta$ elements.\\

Since the length of the $I_j$ has been chosen so small we have that for each $(a, b) \in [A, B] \times [1, 2]$ the points $a+by_i, i=1,2,\dots,n,$ all belong to different intervals $I_j$. For this reason we have, for any fixed $(a, b) \in [A, B] \times [1, 2]$, 
$${\bf Pr  }[(a+bY) \subseteq (\re \setminus G)] ={\bf Pr  }[(a, b) \notin L_Y(G)]=(1-q)^n.$$
For any fixed $a \in [A, B]$, with the bound $|S(a)| \leq 4n/\delta$ on the number of points that it suffices to check we get
$${\bf Pr  }[\exists\\  b \in [1, 2] \text{   such that  }  \\  (a+bY) \subseteq (\re \setminus G)]  \leq |S(a)| (1-q)^n
\leq 4n(1-q)^n/\delta$$
and therefore $${\bf E}  \int_A^B \phi_G(a) da \leq 4(1+C(X))n(1-q)^n/\delta.$$ To check that  ${\bf E}  \int_A^B \phi_G(a) da \leq \varepsilon/3$ it is sufficient that $ 4(1+C(X))n(1-q)^n/\delta \leq \varepsilon/3$,
which is equivalent to (after applying $\log$ function, dividing by $n$, and rearranging),
$$\frac{- \log \delta}{n}+\frac{ \log n}{n}-\frac{\log \varepsilon}{n}+\frac{\log [12(1+C(X))]}{n}
\leq -\log(1-q),$$ 
which will follow from
$$\frac{- \log \delta}{n}+\frac{ \log n}{n}-\frac{\log \varepsilon}{n}+\frac{\log [12(1+C(X))]}{n} \leq q,$$
(using $q<- \log(1-q)$ valid for $q \in (0, 1)$); the last inequality is certainly true with $q=\frac{\varepsilon}{3(1+4C(X))}$ due to our choice of  $X^*$. That completes the proof.
\end{proof}

\bigskip

\begin{remark}  In our plane covering reformulation of the Erd\"os similarity problem  there is still a significant gap between what we know about the problem of "covering almost everything", and the problem of "covering everything". While  some classes of infinite sets are known to be provably non-universal, the Erd\"os similarity problem remains open. To the best of our knowledge, the universality/non-universality status is known for no zero-sequence $\{x_k\}$
that decreases geometrically or faster (i.e., $x_{k+1} \leq \rho x_k$ for some fixed $\rho <1$ and all $k$).\

In particular, it is (rather surprisingly) still unknown whether the geometric sequence $X=\{2^{-k}\}^\infty_{k=0}$ is universal or not. This seems a natural benchmark problem for any next progress on this question, as we can expect that the question of universality/non-universality in such simple explicit example has to be much easier to answer than in the general problem. For this explicit set $X$, one can think about many promising  ways of constructions coverings ( either deterministic or probabilistic)
by our bush-shape sets $L_X(G)$. Scaling invariance present in geometric sequences allows to prove (e.g., using Proposition 1(xi)) interesting properties of $\mu_X$ that are not available for more general $X$. For example, one can prove in this case the following:\

(i)   $\mu_{2^{-r}X}=\mu_X$ for any positive integer $r$,\

(ii)  $\mu_X (A \times B)=\lambda(A).\sigma(B)$ for an outer measure $\sigma$ in $\re\setminus\{0\}$ satisfying $\sigma (\alpha B)=\sigma (B)$ for each constant $\alpha \in \re\setminus\{0\}.$\

We hope that this paper will stimulate many new approaches to the problem.

\end{remark}


\begin{thebibliography}{XXXX}

\bibitem[B]{B} J. Bourgain, \emph{Construction of sets of positive measure not 
containing an affine image of a given infinite structure},
Israel J. Math. {\bf60}(1987), 333--344.


\bibitem[CFG]{CFG} H. T. Croft, K. J. Falconer, and R. K. Guy, \emph{Unsolved problems in geometry},
Springer-Verlag, New York, 1991, 183--184.

\bibitem[E]{E} P. Erd\"os, \emph{My Scottish Book ``problems''}, in {The Scottish Book}, 
R. D. Mauldin (ed.), Birkh\"auser, Boston(1981), 35--43.



\bibitem[F]{F} K.~J.~Falconer, \emph{On a problem of Erd\"os on sequences and 
measurable sets},  Proc. Amer. Math. Soc. {\bf90}(1984), 77--78.

\bibitem[Kol]{Kol} M. N. Kolountzakis, \emph{Infinite patterns that can be avoided by measure},
Bull. London Math. Soc. {\bf29}(1997), 415--424.

\bibitem[Kom]{Kom} P. Komj\'ath, \emph{Large sets not containing images of a given sequence},
Canad. Math. Bull. {\bf26}(1983), 41--43.


\bibitem[S]{S} R. E. Svetic, \emph{The Erd\"os similarity problem: a survey},
Real Anal. Exchange {\bf26}(2000), 525--540.

\end{thebibliography}
\end{document}